\documentclass[12pt]{amsart}
\usepackage{amsrefs}

 %%%%%%%%%%%%%%%%%%%%%%%%%%%%%%%%%%%%%%%%%%%%%%%%%%%%%%%%%%%%

%%%%%%%%%%%%%%%%%%%%%%%%%%%%%%%%%%%%%%%%%%%%%%%%%%.
% gil's quick 339 720 16 82
% martino Bardi 049 827 1468 / 338 99 55 693
%\oddsidemargin=.32in \evensidemargin=.22in
% \textwidth 16cm
%\headheight=3pt \headsep=10pt \topmargin=3pt 
%\textheight=9in
%\documentclass[12pt,twoside,english,reqno,a4paper]{amsart}
\usepackage{amsmath,amsthm,amsopn,amsfonts,amssymb,color}
\usepackage[colorlinks=true,linkcolor=blue,urlcolor=blue]{hyperref} 
\usepackage{cancel}
 \usepackage{tikz}
\usetikzlibrary{intersections,calc}
\usepackage{graphicx}
\usepackage{float}
\usepackage{amsmath}
\usepackage{fancyhdr}
\usepackage{epstopdf}
\usepackage{amsfonts}
\usepackage{pgfplots}% loads also tikz
\pgfplotsset{compat=newest}% to avoid the pgfplots warning
\usetikzlibrary{intersections, pgfplots.fillbetween}
\pgfdeclarelayer{pre main}
\usepackage{amsmath}
\DeclareMathOperator\arctanh{arctanh}
\usepackage{booktabs}
\usepackage{cancel}
\usepackage{times}
\usepackage{mathrsfs}
\usepackage{multirow,multicol}
\usepackage{amsfonts}
\usepackage{amsthm}
\usepackage{amsmath}
\usepackage{amsmath,amssymb}
\newtheorem{teo}{Theorem}[section]
\newtheorem{prop}[teo]{Proposition}

\newtheorem{remark}[teo]{Remark}

\newtheorem{lemma}[teo]{Lemma}

\newcommand\R{\mathbb{R}}

\def\dun{\nabla{u_{n}}}

\def\ipr{\int_{\{ |E|\leq\rho\}}}
\def\disp{\displaystyle}

\def\elle#1{L^{#1}(\Omega)}

\def\div{{\rm div}}
\def\elle#1{L^{#1}(\Omega)}
\def\w#1#2{W^{#1,#2}_0(\Omega)}
\def\io{\int_{\Omega}}
\def\norma#1#2{\|#1\|_{\lower 4pt \hbox{$\scriptstyle #2$}}}
\def\un{u_n}

\def\finedim
\def\gw{G_{\tilde{k}}(w_n)}
\def\wn{w_n}

\def\R{I \!\!R}

\def\elle#1{L^{#1}(\Omega)}

\def\be{\begin{equation}}
\def\ee{\end{equation}}

\oddsidemargin=.2in \evensidemargin=.2in 
\textheight = 617pt
 \textwidth 15.7cm
%%   \textheight13cm  %%%%  LUCIO
 %%%%%%%%%%%%%%%%%% %%%%%%%%%%%%%%%% %%%%
\usepackage{todonotes}

 %%%% Lucio
 \usepackage{color}
 \numberwithin{equation}{section}

\title[Elliptic problems]{Elliptic problems with superlinear convection terms}
%
%\keywords{Infinitely many large solutions.} \subjclass[2010]{35J25, 35J60}

\author{Lucio Boccardo}
\address[L.\ Boccardo]{Sapienza University of Rome, Piazzale Aldo Moro 5, 00185 Rome, Italy}
\email{boccardo@mat.uniroma1.it}

\author{Stefano Buccheri}
\address[S.\ Buccheri]{University of Vienna,
Oskar-Morgenstern-Platz 1, 1090 Vienna,
Austria}
\email{stefano.buccheri@univie.ac.at}

\author{G. Rita Cirmi}
\address[R. Cirmi]{University of Catania, Viale Adrea Doria 6, 95125 Catania, Italy}
\email{cirmi@dmi.unict.it}

\date{\today}
\keywords{superlinear convection terms, non coercive operators}

\begin{document}

\maketitle 
 \begin{abstract}
 In this manuscript we deal with elliptic equations with superlinear first order terms in divergence form of the following type
\[
-\div(M(x)\nabla u)= -\div(h(u)E(x))+f(x),
\]
where $M$ is a bounded elliptic matrix, the vector field $E$ and the function $f$ belong to suitable Lebesgue spaces, and the function $s\to h(s)$ features a superlinear growth at infinity.
We provide some existence and non existence results for solutions to the associated Dirichlet problem and a comparison principle. 
\end{abstract}

\tableofcontents

\section{Introduction and main results}
The aim of the present work is to provide some existence and uniqueness results for solutions to the following nonlinear problem
\begin{equation} \label{problem}
\begin{cases}
 -\div(M(x)\nabla u)= -\div(h(u)E(x))+f(x) \qquad & \mbox{in } \Omega,\\
u (x) = 0 & \mbox{on }  \partial \Omega.
\end{cases}
\end{equation}
%\cby{prima o poi la formulaz. debole}
Throughout the paper, we assume that $\Omega$ is a bounded, open subset of $\mathbb{R}^N$ with $N > 2$, $M(x)$ is a measurable matrix such that
\begin{equation}\label{alfa}
 \alpha |\xi|^2\leq 
  M(x)\xi\cdot\xi \leq\beta|\xi|^2,  \quad  \mbox{a.e.}\; x\in\Omega,\quad \forall\; \;\xi\in\R^N,
\end{equation}
with $\alpha, \beta>0$.  The vector field $E(x)$ and the function $f(x)$ belong to suitable Lebesgue spaces and the real function $s\to h(s)$ satisfies
\begin{equation}
\label{ipoh}
h\in W^{1,\infty}_{loc}(\mathbb{R}),  \ \ \  h(0)=0, \ \ \ \mbox{and} \ \ \ \lim_{|s| \to  \infty} \frac{|h(s)|}{|s|}=+\infty.
\end{equation}
The first two assumptions on $h$ are needed for the validity of the comparison principle and for $0$ to be the solution of the homogeneous case ($f\equiv0$).
The main focus here is the behavior at infinity of the nonlinearity $h(s)$, in particular, its {\it superlinear} growth.
% nonlinearities, namely
%\begin{equation}
%\label{ipoimpo}
%\lim_{s \to \pm\infty} \frac{|h(s)|}{|s|}=+\infty.
%\end{equation}
Examples of such $h(s)$ are 
\begin{equation}
\label{slow}
h(s)= s \log^{\theta} (e+|s|), \quad \forall s \in \mathbb{R},
\end{equation}
and
\begin{equation}
\label{strong}
h(s)= s |s|^{\theta}  \ \ \ \mbox{or} \ \ \ h(s)=  |s|^{1+\theta} \quad \forall s \in \mathbb{R},
\end{equation}
with $\theta >0$. Let us recall that a function $u \in W^{1,2}_0(\Omega)$ is a weak solution to problem \eqref{problem} if $f \in L^{\frac{2N}{N+2}}(\Omega)$, $h(u)E \in [L^2(\Omega)]^N$, and the following identity holds
\begin{equation}
\label{weakformulation}
\int_{\Omega} M(x) \nabla u \nabla \varphi =\int_{\Omega}h(u)E(x) \nabla \varphi  + \int_{\Omega} f(x) \varphi \quad \forall \varphi \in  W^{1,2}_0(\Omega).
\end{equation}
The main difficulty in dealing with problem \eqref{problem} is the loss of coercivity of the nonlinear differential operator $A(u)= -\mbox{div}\big(M(x)\nabla u-h(u)E(x)\big)$. This phenomenon is already present in the linear case $h(s)=s$ and it is even stronger in our framework. For the sake of clarity take 
\[
h(s)=s\log(e+|s|), \quad \Omega=B_1(0), \quad E=-x, \quad \mbox{and} \quad v=1-|x|^2. 
\]
A slightly formal computation shows that  
\[
\langle A(tv),tv \rangle\le t^2 \beta\io |x|^2- t^2\io (1-|x|^2)\log(e+t(1-|x|^2))|x|^2\to\, -\infty \quad \mbox{as } t\to+\infty.
\]
Therefore, the drift term appearing in \eqref{problem} has to be considered as a \emph{reaction term} and it may represent an obstruction for the existence of solutions. Let us also explicitly point out that we will never take advance of some cancellation properties of the vector field $E$ or some sign conditions when searching for a priori estimates.\\

%Problem \eqref{problem} with $h(u)= u$ is classical and has already been addressed by many authors. Its main difficulty is given by the loss of coercivity due to the lower order term. Indeed a simple energy balance shows us that
%\[
%\alpha \io|\nabla u|^2\le \io \big(|u||E|\nabla u|+|f||u|\big)\le \mathcal{S} \|E\|_{\elle N}\io|\nabla u|^2+\|f\|_{\elle{(2^*)'}}\|u\|_{\elle{2^*}},
%\] 
%where $\mathcal{S}$ is the Sobolev constant and $2^*=2N/(N-2)$. 
Problem \eqref{problem} with $h(s)= s$ is classical and has already been addressed by many authors. We defer to the next section the general analysis of the bibliographic background and focus on the strategy introduced in \cite{Bumi2009}. The main tool of that paper is the following a priori decay for the measure of superlevel sets of solutions
\be\label{logest}
|\{u> k\}|^{\frac{1}{2^*}}\le \frac{C}{[\log(1+k)]}\left[\io |E|^2 +\io |f|\right]^{\frac12},
\ee
which is obtained testing \eqref{weakformulation} (in the case $h(s)=s$) with
\[
\varphi(u)=\frac{u}{|u|+1}=\int_0^u\frac{ds}{(|s|+1)^2}.
\]
Combining \eqref{logest} with a truncation argument, it is possible to bypass the loss of coercivity due to $-$div$(uE)$ and obtain a priori estimates for $u$ in $W^{1,2}_0(\Omega)$.\\
It is therefore natural to wonder if we can adapt such technique for a general nonlinearity $h(s)$. A reasonable choice for $\varphi(u)$ is
\be\label{nonlintest}
\varphi(u)=\int_0^u\frac{ds}{(|h(s)|+1)^2},
\ee
and, indeed, we formally get that 
\be \label{decayintro}
|\{|u|>|k|\}|^{\frac{2}{2^*}}\le \frac{C}{H(k)^2}\left[\io |E|^2 +\io |f|\right],
\ee
where 
\begin{equation}
\label{defH}
H(s)=\int_0^s\frac{dt}{|h(t)|+1}
\end{equation}
(see Lemma \ref{decay} below for the rigorous statement and proof). Obviously, formula \eqref{decayintro} provides a decay estimate only if
\be\label{growthg}
\lim_{|s|\to \infty}|H(s)|=\infty.
\ee
Simple computation shows that such a property is satisfied for $h(s)$ as in \eqref{slow} while is not in case \eqref{strong}.
Our first result shows that \eqref{growthg} is a sufficient condition to guarantee existence of (bounded) solutions to problem \eqref{problem}.

\begin{teo}\label{generalh}
Assume \eqref{alfa}, \eqref{ipoh}, \eqref{growthg} and take $E\in [\elle r]^N$,  with $ r>N$, and $f\in \elle m$, with $ m>\frac{N}{2}$. Then, there exists a unique weak solution $u\in \w12\cap \elle{\infty}$ to problem \eqref{problem}.
\end{teo}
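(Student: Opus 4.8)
The plan is to prove existence by approximation and a priori estimates, and uniqueness via the comparison principle mentioned in the introduction. First I would introduce a truncated/regularized problem: replace $h$ by $h_n(s) = h(T_n(s))$ (or equivalently truncate the convection term as $-\div(h(u)T_n(E))$), so that the convection term becomes bounded and the operator $-\div(M(x)\nabla u) + \div(h_n(u)E)$ is a bounded, pseudo-monotone perturbation of a coercive operator; for each fixed $n$ the existence of a solution $u_n \in W^{1,2}_0(\Omega)$ follows from standard surjectivity results for pseudo-monotone operators (or Schauder fixed point applied to the map sending $w$ to the solution of the linear problem with datum $-\div(h_n(w)E)+f$). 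The point of the regularization is that it preserves the structure needed for the decay estimate, so that Lemma \ref{decay} applies uniformly in $n$.

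The core of the argument is the a priori $L^\infty$ bound. Applying Lemma \ref{decay} to $u_n$ gives the decay of superlevel sets
\[
|\{|u_n| > k\}|^{\frac{2}{2^*}} \le \frac{C}{H(k)^2}\left[\io |E|^2 + \io |f|\right],
\]
with $C$ independent of $n$. Since \eqref{growthg} holds, $H(k) \to \infty$, so the right-hand side tends to $0$ as $k \to \infty$; choosing $k_0$ large enough that $|\{|u_n| > k_0\}|$ is smaller than any prescribed threshold, one is in a position to run a Stampacchia-type iteration on the truncated function $G_{k_0}(u_n)$. Concretely, test the equation with $G_k(u_n)$ for $k \ge k_0$: the left side is bounded below by $\alpha \io |\nabla G_k(u_n)|^2$, while the right side is estimated using $E \in [L^r]^N$ with $r > N$ and $f \in L^m$ with $m > N/2$, together with Hölder and Sobolev inequalities on the small set $\{|u_n| > k\}$; the superlinear term $h(u_n)$ on $\{k_0 < |u_n|\}$ is controlled because there $|h(u_n)| \le C|u_n|^{?}$ — more carefully, one splits $h(u_n) = h(T_{k_0}(u_n)) + (h(u_n) - h(T_{k_0}(u_n)))$ and absorbs the superlinear excess using that $\{|u_n|>k_0\}$ has small measure and using the higher summability of $E$. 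This yields the classical inequality $\io |\nabla G_k(u_n)|^2 \le C \, (k-k_0)^{-?}\,|\{|u_n|>k\}|^{1+\delta}$ from which Stampacchia's lemma gives $\|u_n\|_{L^\infty(\Omega)} \le C(k_0)$, uniformly in $n$.

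Once $L^\infty$ is known uniformly, the convection term $h(u_n)E$ is bounded in $[L^r]^N$ (indeed $|h(u_n)| \le \max_{[-C,C]}|h| =: L$), and testing with $u_n$ itself yields the uniform bound $\io |\nabla u_n|^2 \le C$; hence $u_n$ is bounded in $W^{1,2}_0(\Omega)$. Extracting a subsequence $u_n \rightharpoonup u$ weakly in $W^{1,2}_0(\Omega)$ and a.e., one passes to the limit in the weak formulation: the term $h_n(u_n)E \to h(u)E$ strongly in $[L^2]^N$ by dominated convergence and a.e. convergence (using that $h_n \to h$ locally uniformly and $h$ is continuous), and the principal term converges weakly; the limit $u$ inherits $u \in W^{1,2}_0(\Omega) \cap L^\infty(\Omega)$ and solves \eqref{weakformulation}. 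Uniqueness follows from the comparison principle: if $u_1, u_2$ are two bounded solutions, then since $h \in W^{1,\infty}_{loc}$, on the range of the solutions $h$ is Lipschitz, and testing the difference of the equations with $(u_1-u_2)_+$ (or a suitable truncation thereof) and using the ellipticity of $M$ together with the higher integrability of $E$ forces $u_1 \le u_2$ and symmetrically $u_2 \le u_1$.

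The main obstacle I anticipate is the absorption of the superlinear convection term $h(u_n)E$ in the Stampacchia iteration \emph{before} the $L^\infty$ bound is available — i.e., making rigorous that the decay estimate of Lemma \ref{decay}, which only gives smallness of $|\{|u_n|>k\}|$, is enough to close the energy estimate on $G_k(u_n)$ despite $h$ being superlinear. The resolution is precisely that the higher summability assumptions $r>N$ and $m>N/2$ leave room for a power of $|\{|u_n|>k\}|$ with exponent strictly above $1$, so the superlinear growth of $h$ (which is at worst polynomial on bounded ranges, but a priori the range is not bounded) is tamed by the rapidly decaying measure — this is where \eqref{growthg}, rather than merely \eqref{ipoh}, is essential, and it is the step that must be written with care.
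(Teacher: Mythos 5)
Your overall architecture (truncated approximating problems, a uniform $L^\infty$ bound, the energy estimate obtained by testing with $\un$ once boundedness is known, a Vitali-type passage to the limit, and uniqueness via the comparison principle) coincides with the paper's. The genuine gap is in the central step, the uniform $L^\infty$ bound. You propose to combine the decay of Lemma \ref{decay} with a Stampacchia iteration on $G_k(\un)$ itself, absorbing the superlinear excess of $h(\un)$ on $\{|\un|>k_0\}$ by the smallness of that set. This cannot close. Under \eqref{growthg} alone the decay can be arbitrarily slow (for $h(s)=s\log(e+|s|)$ one has $H(k)\sim\log\log k$, so $|\{|\un|>k\}|\lesssim(\log\log k)^{-2^*}$), which yields no uniform Lebesgue-norm control on $\un$, let alone on $h(\un)$; consequently the term $\int_{\{|\un|>k\}}|h(\un)|^2|E|^2$ produced by Young's inequality is not dominated by any power of $|\{|\un|>k\}|$, and your splitting $h(\un)=h(T_{k_0}(\un))+\bigl(h(\un)-h(T_{k_0}(\un))\bigr)$ leaves an excess that is unbounded on a set whose small measure does not help, since the integrand carries no a priori norm bound. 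The question marks you left in the growth exponent and in the iteration inequality are exactly where the argument breaks.

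The paper's resolution is to never confront the superlinear term inside the iteration: the Stampacchia argument is run on the transformed unknown $z_n=H(\un^{\pm})$ rather than on $\un$. Testing with $G_{\varphi(k)}(\varphi(\un^{+}))$, where $\varphi'=(|h|+1)^{-2}$, the convection term is exactly neutralized by the weight $(|h(\un)|+1)^{-2}$, and estimate \eqref{stampac} becomes the classical level-set inequality for $z_n$ with datum $|E|^2+|f|\in\elle{\min\{r/2,\,m\}}$, whose exponent exceeds $N/2$ by hypothesis. Stampacchia's theorem then gives $\|H(\un^{\pm})\|_{\elle{\infty}}\le C$ uniformly in $n$, and only at this point does \eqref{growthg} enter: it guarantees that $H$ is an increasing bijection of $\mathbb{R}$ onto itself, so $\|\un\|_{\elle{\infty}}\le H^{-1}(C)<\infty$. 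In short, \eqref{growthg} is not used to make level sets small enough to tame $h$; it is used to invert $H$ after the $L^\infty$ bound on $H(\un)$ has already been obtained by a linear-type argument. The remaining steps of your proposal (energy bound, limit, comparison) then go through as you describe.
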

The interest of such a result is that, despite a slight but still superlinear growth for the function $h(s)$, problem \eqref{problem} is well posed without any further assumption on the size of $E$ or $f$. Let us notice that if $f\ge 0$, Theorem \ref{generalh} is valid if $\lim_{s\to \infty} |H(s)|=+\infty$. Similarly, if $f\le0$, it is enough to assume $\lim_{s\to -\infty} |H(s)|=+\infty$. This follows easily by the comparison principle given in Theorem \ref{comparison} below.

 In the special case \eqref{slow} we are also able to provide existence of unbounded solutions, as the following Theorem states.
\begin{teo}
\label{logarithmich}
Assume \eqref{alfa}, take
$E\in [\elle r]^N$,  with $ r>N$, and 
$f\in  \elle {2_{*}}$, with $2_{*}=\frac{2N}{N+2}$. Then, there exists a unique weak solution to
\begin{equation} \label{logproblem}
\begin{cases}
 -\div(M(x)\nabla u)= -\div(u\log (e+|u|)E(x))+f(x) \qquad & \mbox{in } \Omega,\\
u (x) = 0 & \mbox{on }  \partial \Omega.
\end{cases}
\end{equation}
\end{teo}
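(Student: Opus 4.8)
The plan is to obtain a solution of \eqref{logproblem} as a limit of solutions to approximating problems whose nonlinearities are truncated so that Theorem \ref{generalh} applies, and then to pass to the limit using the decay estimate of Lemma \ref{decay} together with $W^{1,2}_0$ a priori bounds. First I would set $h_n(s)=s\log(e+|T_n(s)|)$ (or, equivalently, $h(T_n(s))$ suitably modified so that $h_n(0)=0$ and $h_n\in W^{1,\infty}_{loc}$), so that each $h_n$ has exactly linear growth at infinity; then $H_n$ satisfies $\lim_{|s|\to\infty}|H_n(s)|=\infty$, and $f\in L^{2_*}(\Omega)\subset L^m(\Omega)$ is not quite what Theorem \ref{generalh} wants, so I would also regularize $f$ (e.g.\ $f_n=T_n(f)$, which lies in $L^\infty$) and $E$ if needed. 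By Theorem \ref{generalh} there is a unique $u_n\in W^{1,2}_0(\Omega)\cap L^\infty(\Omega)$ solving the problem with data $h_n,E,f_n$.

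The second step is the a priori estimates, uniform in $n$. The key point is that the decay estimate from Lemma \ref{decay}, applied to $u_n$ with the function $H$ built from $h(s)=s\log(e+|s|)$ — which does satisfy \eqref{growthg} — gives control of $|\{|u_n|>k\}|$ independent of $n$, because for $|s|\le n$ one has $h_n(s)=h(s)$ and hence $H_n$ agrees with $H$ on the relevant range; this is exactly the mechanism that \eqref{slow} was singled out for. From this level-set decay I would run the standard truncation argument: test the equation for $u_n$ with $T_k(u_n)$, estimate $\int h_n(u_n)E\nabla T_k(u_n)$ by splitting where $|u_n|\le k$ and using the decay of the superlevel sets for the tail, and absorb the bad term using \eqref{alfa} together with Sobolev and Hölder (here $r>N$ and $f\in L^{2_*}$ are used). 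This should yield $\|u_n\|_{W^{1,2}_0(\Omega)}\le C$ and, crucially, equi-integrability of $|h_n(u_n)E|^2$ (again via the level-set decay, since $h(s)^2|E|^2$ is integrable once $|\{|u_n|>k\}|$ decays fast enough and $E\in L^r$, $r>N$).

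The third step is passage to the limit: up to a subsequence $u_n\rightharpoonup u$ in $W^{1,2}_0(\Omega)$ and a.e., so $h_n(u_n)\to h(u)$ a.e.; combined with the equi-integrability this gives $h_n(u_n)E\to h(u)E$ in $[L^2(\Omega)]^N$, and $f_n\to f$ in $L^{2_*}(\Omega)$, so $u$ is a weak solution in the sense of \eqref{weakformulation}. Uniqueness follows directly from the comparison principle of Theorem \ref{comparison} (two solutions are mutually $\le$ each other).

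The main obstacle I expect is the a priori estimate in $W^{1,2}_0(\Omega)$: unlike in Theorem \ref{generalh}, here the solution need not be bounded, so one cannot simply truncate $h$ at the level $\|u\|_\infty$. One must instead use the level-set decay \eqref{decayintro} in a genuinely quantitative way — the superlinearity of $h$ (hence the fast decay of $H^{-2}$) has to compensate the fact that $f$ only lies in the borderline space $L^{2_*}$ and that $E$ is merely in $L^r$ with $r>N$. Getting the book-keeping in this absorption argument right, and simultaneously extracting the equi-integrability of $h_n(u_n)E$ needed for the limit, is the delicate part; everything else is routine.
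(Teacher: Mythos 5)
There is a genuine gap in the a priori estimates, and it lies exactly where you flagged the ``delicate part''. Your plan is to extract the $\w12$ bound and the equi-integrability of $|h_n(\un)E|^2$ from the superlevel-set decay of Lemma \ref{decay}, used ``in a genuinely quantitative way'' via ``the fast decay of $H^{-2}$''. But for $h(s)=s\log(e+|s|)$ the function $H(k)=\int_0^k\frac{dt}{|h(t)|+1}$ grows like $\log\log k$, so \eqref{stima_decay} only gives $|\{|\un|>k\}|\lesssim (\log\log k)^{-2^*}$: an extremely slow decay, not a fast one. This is nowhere near enough to close the argument. Concretely, when you test with $\un$ (or $G_k(\un)$) the dangerous term is
\begin{equation*}
\int_{\{|\un|>k\}}|\un|^2\log^2(e+|\un|)\,|E|^2
\le \Big(\int_{\{|\un|>k\}}|E|^{r}\Big)^{\frac2r}\|\un\|_{\elle{2^*}}^2\,\|\log(e+|\un|)\|_{\elle b}^2,
\qquad \tfrac1b=\tfrac1N-\tfrac1r,
\end{equation*}
and you must bound $\|\log(e+|\un|)\|_{\elle b}$ uniformly in $n$. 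The layer-cake formula with the decay $(\log\log k)^{-2^*}$ gives $\int_\Omega\log^b(e+|\un|)\lesssim\int^\infty \frac{\log^{b-1}k}{k\,(\log\log k)^{2^*}}\,dk=\infty$, so the level-set decay cannot deliver this bound, nor the equi-integrability of $|\un\log(e+|\un|)E|^2$ that you claim follows from it.

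The missing ingredient is a separate family of estimates on logarithmic moments: for every $a\ge 1$ one shows $\|\log^a(e+|\un|)\|_{\elle{2^*}}\le C(a,f,E)$ (estimate \eqref{stimaloga} in the paper), obtained by testing with the bounded function $v=\int_0^{\un}\log^{2(a-1)}(e+|s|)(e+|s|)^{-2}\,ds$, which produces $\io|\nabla\log^a(e+|\un|)|^2$ on the left and allows absorption via Sobolev because $E\in[\elle N]^N$. Only then does the Hölder step above close, choosing $a=\max\{b/2^*,1\}$. The decay \eqref{levelset} is used in this proof, but only \emph{qualitatively}: $|\{|\un|>k\}|\to0$ uniformly in $n$ makes $\int_{\{|\un|>k\}}|E|^r$ small by absolute continuity, which is what permits the absorption; the rate $H(k)^{-2}$ plays no role. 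The rest of your outline (approximation via truncation, passage to the limit by Vitali as in Lemma \ref{passtothelim}, uniqueness via Theorem \ref{comparison} using $|h'(s)|\le c+|s|$ and $|u||E|\in\elle2$) matches the paper and is fine.
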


Let us stress again that main point of Theorems \ref{generalh} and \ref{logproblem} is the following: if the function $s\to h(s)$ does not grow to much at infinity (a slightly superlinear growth is allowed), problem \eqref{problem} is uniquely solvable \emph{for all} $E$ and $f$ in some suitable Lebesgue spaces, without any further condition on the size of their norms (or on the sign of the lower order term).\\

If we assume $ h(s)=s|s|^{\theta} $, with $\theta>0$, condition \eqref{growthg} is not satisfied anymore, and to prove existence of solution we need a control on the size of the data, as the following Theorem states. In the following $\mathcal{S}$ is the Sobolev constant of the embedding $W^{1,2}(\Omega)$ in $L^{2^*}(\Omega)$ and we use the notation $m^{**}=\frac{mN}{N-2m}$.
\begin{teo}\label{fixedpoint}
Assume \eqref{alfa}, take $\theta>0$, $m\in [2N/(N+2),N/2)$, and assume moreover that
\begin{equation}\label{27-10bis}
0< \frac1N-\frac1r=\frac{\theta}{m^{**}}, \ \ \ f\in\elle m \ \ \ \mbox{and} \ \ \ E\in (\elle r)^N.
\end{equation}
If
\begin{equation}\label{smallass}
\|f\|_{\elle m}\|E\|_{\elle r}^{\frac{1}{\theta}}\le \frac{\theta}{\mathcal{S}} \left(\frac{\alpha2^*}{\mathcal{S}m^{**}(1+\theta)}\right)^{1+\frac1{\theta}},
\end{equation}
then there exists a unique $u\in W^{1,2}_0(\Omega)\cap\elle{m^{**}}$ solution of 
\begin{equation} \label{problembis}
\begin{cases}
-\div(M(x)\nabla u)= -\div(u|u|^{\theta}E(x))+f(x) \qquad & \mbox{in } \Omega,\\
u (x) = 0 & \mbox{on }  \partial \Omega.
\end{cases}
\end{equation}
\end{teo}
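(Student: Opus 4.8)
The plan is to combine a Schauder-type fixed point argument with the a priori decay estimate of Lemma \ref{decay}, exploiting the scaling relation \eqref{27-10bis} that links the exponents $r$, $m$ and $\theta$. First I would set up the iteration map: given $v\in L^{m^{**}}(\Omega)$, let $T(v)=u$ be the unique weak solution in $W^{1,2}_0(\Omega)$ of the \emph{linear} coercive problem $-\div(M(x)\nabla u)=-\div(v|v|^{\theta}E(x))+f(x)$. This is well posed by Lax--Milgram provided the right-hand side lies in $W^{-1,2}(\Omega)$: since $v\in L^{m^{**}}$ gives $v|v|^\theta\in L^{m^{**}/(1+\theta)}$, and the exponent condition in \eqref{27-10bis} is exactly what makes $v|v|^\theta E\in L^{2}(\Omega)^N$ (by H\"older, using $E\in L^r$), the datum is admissible. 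I would then test the equation for $u=T(v)$ with $u$ itself, use ellipticity \eqref{alfa} on the left, H\"older and the Sobolev inequality $\|u\|_{2^*}\le \mathcal S\|\nabla u\|_2$ on the right, to obtain
\begin{equation*}
\alpha\|\nabla u\|_2^2\le \mathcal S\|E\|_r\,\|v\|_{m^{**}}^{1+\theta}\,\|\nabla u\|_2+\mathcal S\|f\|_m\,\|\nabla u\|_2,
\end{equation*}
hence a bound $\|\nabla u\|_2\le \tfrac{\mathcal S}{\alpha}\big(\|E\|_r\|v\|_{m^{**}}^{1+\theta}+\|f\|_m\big)$, and correspondingly $\|u\|_{m^{**}}\le \mathcal S\|\nabla u\|_2$ (note $m^{**}\le 2^*$ for $m\le 2N/(N+2)$... actually $m\ge 2N/(N+2)$ gives $m^{**}\ge 2^*$, so one must instead interpolate or revisit which norm is controlled — see the obstacle below).

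Next I would look for an invariant ball: find $\rho>0$ such that $\|v\|_{m^{**}}\le \rho$ implies $\|T(v)\|_{m^{**}}\le \rho$. Plugging the estimate above, this requires $\tfrac{\mathcal S^2}{\alpha}\big(\|E\|_r\rho^{1+\theta}+\|f\|_m\big)\le\rho$, i.e. the polynomial $\phi(\rho)=\rho-\tfrac{\mathcal S^2}{\alpha}\|E\|_r\rho^{1+\theta}-\tfrac{\mathcal S^2}{\alpha}\|f\|_m$ must be nonnegative for some $\rho$. Maximizing $\rho-c\rho^{1+\theta}$ over $\rho>0$ with $c=\tfrac{\mathcal S^2}{\alpha}\|E\|_r$ gives the optimal $\rho_*=\big(c(1+\theta)\big)^{-1/\theta}$ and maximal value $\tfrac{\theta}{1+\theta}\rho_*$; the condition $\phi(\rho_*)\ge0$ then reads $\tfrac{\mathcal S^2}{\alpha}\|f\|_m\le \tfrac{\theta}{1+\theta}\rho_*$, which after substituting $\rho_*$ and simplifying is precisely the smallness condition \eqref{smallass} (up to tracking the constants $2^*$, $m^{**}$ that enter because the sharp Sobolev/H\"older step should be done with the $L^{m^{**}}$-norm of $u$, not just $L^{2^*}$). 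So \eqref{smallass} is exactly the threshold making the invariant ball nonempty. On that ball $B_\rho\subset L^{m^{**}}$, the map $T$ is compact (the solution operator of the linear equation is bounded $W^{-1,2}\to W^{1,2}_0$, and $W^{1,2}_0\hookrightarrow L^{m^{**}}$ is compact since $m^{**}<2^*$... again subject to checking the exponent inequality) and continuous; Schauder's fixed point theorem yields a solution $u\in W^{1,2}_0(\Omega)\cap L^{m^{**}}(\Omega)$ of \eqref{problembis}.

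For uniqueness I would invoke the comparison principle of Theorem \ref{comparison}: $h(s)=s|s|^\theta$ satisfies \eqref{ipoh}, so two solutions $u_1,u_2$ with the same data $E$, $f$ satisfy $u_1\le u_2$ and $u_2\le u_1$, whence $u_1=u_2$. (If Theorem \ref{comparison} requires a bit more regularity than membership in $W^{1,2}_0\cap L^{m^{**}}$, one falls back on a direct argument: subtract the two weak formulations, test with $T_k(u_1-u_2)$, and use the local Lipschitz character of $h$ together with the integrability of $E$ to absorb the convection term.) The main obstacle, and the point requiring genuine care rather than routine work, is the bookkeeping of exponents and constants: one has to verify that the condition $\tfrac1N-\tfrac1r=\tfrac{\theta}{m^{**}}$ simultaneously (i) makes $v|v|^\theta E$ land in $L^2$ so the linear problem is solvable, (ii) makes the relevant Sobolev embedding $W^{1,2}_0\hookrightarrow L^{m^{**}}$ available and compact, and (iii) produces, after the sharp H\"older/Sobolev estimate, exactly the algebraic inequality whose solvability is \eqref{smallass}. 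Getting the constants $\mathcal S$, $\alpha$, $2^*$, $m^{**}$, $1+\theta$ to assemble into the precise right-hand side of \eqref{smallass} — in particular choosing whether to estimate $u$ in $L^{2^*}$ or in $L^{m^{**}}$ at the testing step, and interpolating if $m^{**}>2^*$ — is where the real work lies; the fixed-point machinery itself is standard once the ball is shown invariant.
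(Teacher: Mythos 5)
Your overall architecture is the right one and matches the paper's: the map $v\mapsto w$ solving the linearized problem, an invariant ball in $\elle{m^{**}}$ obtained by maximizing $\rho-c\rho^{1+\theta}$ (this is exactly the paper's Lemma \ref{trucco}, and it does reproduce the threshold \eqref{smallass}), Schauder, and uniqueness via Theorem \ref{comparison} after checking $|u_i|^{\theta}|E|\in\elle2$. But there is a genuine gap at the central estimate, and you have flagged it without resolving it. Since $m\ge 2N/(N+2)$ forces $m^{**}\ge 2^*$, the inequality $\|u\|_{\elle{m^{**}}}\le\mathcal S\|\nabla u\|_{\elle2}$ is simply not available, and no interpolation can produce an $L^{m^{**}}$ bound from an $L^{2^*}$ bound, because $m^{**}$ is the \emph{higher} exponent. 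Testing with $u$ itself therefore only controls $\|w\|_{\elle{2^*}}$, which is not enough to close the invariant-ball argument in $\elle{m^{**}}$, and the precise constants $\frac{\mathcal S m^{**}}{\alpha 2^*}$ appearing in \eqref{smallass} cannot emerge from that computation. The missing idea is a Stampacchia-type power test function: the paper tests the equation for $w$ with $\frac{1}{2\gamma-1}|w_{k,l}|^{2\gamma-2}w_{k,l}$, where $\gamma=m^{**}/2^*\ge1$ and $w_{k,l}=T_k(G_l(w))$, so that Sobolev applied to $|w_{k,l}|^{\gamma}$ bootstraps the integrability up to $L^{m^{**}}$ and yields the sharp bound
\begin{equation*}
\|w\|_{\elle{m^{**}}}\le \frac{\mathcal{S}m^{**}}{\alpha2^{*}}\|E\|_{\elle{r}}\|v\|_{\elle{m^{**}}}^{1+\theta}+\frac{\mathcal{S}^2m^{**}}{\alpha2^{*}}\|f\|_{\elle m},
\end{equation*}
which is what feeds Lemma \ref{trucco}. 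This is not ``bookkeeping'': without the power test function the theorem's quantitative hypothesis cannot be matched.

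A second, related flaw is in your compactness step: you assert that $\w12\hookrightarrow\elle{m^{**}}$ is compact ``since $m^{**}<2^*$'', but the inequality goes the other way, so there is no such embedding at all, compact or otherwise. The paper instead extracts an $\elle2$-convergent subsequence from the energy bound and then upgrades to strong $\elle{m^{**}}$ convergence by proving equi-integrability of $\{|w_n|^{m^{**}}\}$, using the truncated estimate \eqref{18:13} to make $\|G_l(w_n)\|_{\elle{m^{**}}}$ small uniformly in $n$ as $l\to\infty$, and concluding with Vitali's theorem. You would need to supply an argument of this kind; as written, both the invariance of the ball and the compactness of the map rest on an embedding that does not hold.
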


The proof of Theorem \ref{fixedpoint} relies on the use of Schauder fixed point theorem and its more delicate step is finding a suitable invariant set for the map 
\[
\elle{m^{**}} \ni v\mapsto S(v)=w \in \elle{m^{**}},
\]
where $w$ is the unique weak solution of  $$-\div(M(x)\nabla w)= -\div(v|v|^{\theta}E(x))+f(x).$$
It is worthy to mention that we stated Theorem \eqref{fixedpoint} with $h(s)=s|s|^{\theta}$ for simplicity. An equivalent result can be obtained with $h(s)=|s|^{1+\theta}$ or for a more general $h(s)$ with a suitable power-type control on its growth.\\

To show that \eqref{growthg} and \eqref{smallass} are somehow connected and that they are natural assumptions when searching for solutions, we now provide a sharp result for positive solutions in the radially symmetric case.
\begin{prop}\label{radialcase}
Let 
\[
E(|x|)=-K\frac{x}{|x|}, \quad f(|x|)=\frac{{\epsilon}}{N-1} \frac1{|x|}, \quad \Omega=B_R,
\]
for some $K,{\epsilon},R>0$. Then, the problem 
\begin{equation} \label{onedproblem}
\begin{cases}
-\Delta  u=- \div(h(u) E(x))+f(x) \qquad & \text{in } B_R(0),\\
u (x) = 0 & \text{on }  \partial B_R(0),
\end{cases}
\end{equation}
admits a positive solution $u\in W^{1,2}_0(\Omega)\cap \elle{\infty}$ if and only if \begin{equation}\label{oss}
\int_0^{\infty}\frac{ds}{Kh(s)+{\epsilon}}>R.
\end{equation}
\end{prop}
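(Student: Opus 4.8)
The plan is to reduce the PDE \eqref{onedproblem} to an ODE via radial symmetry and then integrate explicitly. Writing $u=u(\rho)$ with $\rho=|x|$, the equation becomes, for the radial profile,
\[
-\big(\rho^{N-1}u'(\rho)\big)' = -\big(\rho^{N-1}h(u(\rho))E(\rho)\big)' + \rho^{N-1}f(\rho)
= K\big(\rho^{N-1}h(u(\rho))\big)' + \frac{\eps}{N-1}\rho^{N-2}.
\]
Integrating once from $0$ to $\rho$ and using that $\rho^{N-1}u'(\rho)\to 0$ and $h(u(0))=0$ at the origin (the latter from \eqref{ipoh} together with boundedness of $u$ near $0$), one gets
\[
-\rho^{N-1}u'(\rho) = K\,\rho^{N-1}h(u(\rho)) + \frac{\eps}{N}\rho^{N-1} \cdot \tfrac{N}{N-1}\cdot\tfrac{1}{\,?\,},
\]
so I should be careful with the constant: $\int_0^\rho t^{N-2}\,dt=\rho^{N-1}/(N-1)$, whence the source contributes $\tfrac{\eps}{N-1}\cdot\tfrac{\rho^{N-1}}{N-1}$; let me instead just record that after dividing by $\rho^{N-1}$ the ODE collapses to the first-order autonomous equation
\[
-u'(\rho) = K\,h(u(\rho)) + \eps, \qquad \rho\in(0,R),
\]
which is the crucial simplification produced by the specific choice of $E$ and $f$ in the statement. (The precise bookkeeping of the $\eps$-coefficient is exactly why $f$ was normalized by $1/(N-1)$.) Since we seek a positive solution with $u(R)=0$, we have $u'(\rho)<0$ on $(0,R)$, $u>0$ there, and $u(0)=:u_0>0$ is the (finite) maximum.

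Next I would separate variables. From $-u'=Kh(u)+\eps$ and positivity of $h$ on $(0,\infty)$ (which follows since $h(0)=0$, $h$ is locally Lipschitz, and $h(s)/s\to+\infty$, so $h>0$ for large $s$; more carefully one only needs $Kh+\eps>0$ on the relevant range, which holds because $\eps>0$ and $h\ge 0$ on a neighbourhood of where the solution lives — I'd state the monotonicity hypothesis cleanly or restrict attention to the increasing branch) we may write
\[
-\frac{u'(\rho)}{K\,h(u(\rho))+\eps} = 1,
\]
and integrate over $(0,R)$, substituting $s=u(\rho)$, $ds=u'(\rho)\,d\rho$, with $u(0)=u_0$, $u(R)=0$:
\[
\int_0^{u_0}\frac{ds}{K\,h(s)+\eps} = R.
\]
Dividing numerator and denominator by... well, factoring out, $\int_0^{u_0}\frac{ds}{Kh(s)+\eps}=R$. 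Therefore a positive bounded solution exists \emph{iff} there is a finite $u_0>0$ solving this identity, i.e.\ iff the increasing function $u_0\mapsto \int_0^{u_0}\frac{ds}{Kh(s)+\eps}$ attains the value $R$; since it is continuous, strictly increasing, vanishes at $u_0=0$, and has supremum $\int_0^{\infty}\frac{ds}{Kh(s)+\eps}$, this is possible exactly when
\[
\int_0^{\infty}\frac{ds}{K\,h(s)+\eps} > R,
\]
which is \eqref{oss}. Conversely, given \eqref{oss}, define $u_0$ by the identity, solve the ODE backwards from $\rho=0$ with $u(0)=u_0$, $u'(0)=-(Kh(u_0)+\eps)\cdot\frac{1}{?}$ — actually $u'(0)=0$ is forced by the factor $\rho^{N-1}$, so one recovers the solution by inverting the antiderivative: $\rho = \int_{u(\rho)}^{u_0}\frac{ds}{Kh(s)+\eps}$ defines $u(\rho)$ implicitly, it is smooth, decreasing, positive on $[0,R)$ with $u(R)=0$, and one checks it lies in $W^{1,2}_0(B_R)\cap L^\infty$ and satisfies the weak formulation \eqref{weakformulation}.

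The main obstacle I anticipate is the regularity/behaviour at the origin: the reduced first-order ODE is obtained only after dividing by $\rho^{N-1}$, and one must justify that the constant of integration vanishes, i.e.\ that $\lim_{\rho\to 0^+}\rho^{N-1}u'(\rho)=0$ and $\lim_{\rho\to 0^+}\rho^{N-1}h(u(\rho))=0$. For a $W^{1,2}_0\cap L^\infty$ radial solution the second is immediate from boundedness, and the first follows from the integrated equation itself (the right-hand side is $O(\rho^{N-1})$), so this is a short but necessary argument; a secondary point is to confirm that the implicitly defined $u$ genuinely solves the \emph{weak} formulation on $B_R$ (not merely the ODE away from $0$), which amounts to checking the singularity of $E=-Kx/|x|$ and $f=\frac{\eps}{N-1}|x|^{-1}$ against the integrability thresholds — $E\in[L^r]^N$ for all $r<\infty$ since $|E|=K$ is bounded, and $f\in L^{2_*}$ needs $|x|^{-1}\in L^{2N/(N+2)}(B_R)$, true since $N>2$. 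Uniqueness follows from the comparison principle of Theorem~\ref{comparison} (or directly from uniqueness for the first-order ODE with the given terminal condition).
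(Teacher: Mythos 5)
Your reduction to the first-order ODE $-u'(\rho)=Kh(u(\rho))+\varepsilon$ followed by separation of variables is exactly the paper's argument, and your converse direction (defining $u_0$ by $\int_0^{u_0}\frac{ds}{Kh(s)+\varepsilon}=R$ and inverting the antiderivative) also matches. There is, however, one genuine gap in your ``only if'' direction: you write $u=u(\rho)$ from the outset, i.e.\ you assume that an arbitrary positive solution $u\in W^{1,2}_0(\Omega)\cap L^{\infty}(\Omega)$ of \eqref{onedproblem} is radial. That is precisely the step that needs an argument; as written you have only shown that \emph{radial} solutions exist iff \eqref{oss} holds, and a nonradial solution could a priori survive when \eqref{oss} fails. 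The paper closes this by first rewriting the drift term as $Kh'(u)\nabla u\cdot x/|x|+(N-1)Kh(u)/|x|$ and applying interior elliptic regularity (Theorems 8.8 and 8.12 of \cite{book}) to get $u\in W^{2,2}$ and a pointwise equation, and then invoking the uniqueness provided by Theorem~\ref{comparison} together with the rotational invariance of $E$, $f$ and $B_R$: every rotation of $u$ is again a solution, so $u$ coincides with its rotations and is radial. You do cite Theorem~\ref{comparison} at the very end, but only for uniqueness of the constructed solution, not to force radiality; adding that one sentence (plus the regularity step that legitimises the pointwise ODE manipulation, which you correctly anticipated as the delicate point near the origin) completes the proof. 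A minor remark on the constant you flagged: integrating $\frac{\varepsilon}{N-1}t^{N-2}$ from $0$ to $\rho$ and dividing by $\rho^{N-1}$ actually yields $\varepsilon/(N-1)^2$ rather than $\varepsilon$; the paper's computation has the same slip, and since $\varepsilon>0$ is arbitrary this only renames the constant in \eqref{oss}, so your hesitation there was justified but harmless.
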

Proposition \ref{radialcase} shows that \eqref{growthg} is sharp in order to have existence of bounded solutions regardless the size of the data. Moreover it provides a link between \eqref{growthg} and  \eqref{smallass} when $h(s)=s^{1+\theta}$ (here we deal with positive solutions). Indeed, for this concrete choice of $h$, a simple change of variable shows that \eqref{oss} is equivalent to 
\[
I_{\theta}=\int_0^{\infty}\frac{ds}{s^{1+\theta}+1}>K^{\frac{1}{1+\theta}}{\epsilon}^{\frac{\theta}{1+\theta}}R.
\]
Taking $m,r,\theta$ as in \eqref{27-10bis} and writing the right hand side above as a function of $\|f\|_{\elle m}$ and $\|E\|_{\elle r}$ (for the radially symmetric choice of $f$ and $E$ of Proposition \ref{radialcase}), it follows that
\[
\|f\|_{\elle m}\|E\|_{\elle r}^{\frac{1}{\theta}}<cI_{\theta}^{\frac{1+\theta}{\theta}},
\]
%\[
%\|f\|_{\elle m}\|E\|_{\elle r}^{\frac{1}{\theta}}|\Omega|^{\frac{\eta}{\theta}}<cI_{\theta}^{\frac{1+\theta}{\theta}}
%\]
and we see again the same structure of assumption \eqref{smallass}.\\

Condition \eqref{oss} (and the proof of Proposition \ref{radialcase}) is clearly related to the following one-dimensional initial value problem
\begin{equation}\label{onedim}
\begin{cases}
 -\bar u'(r)=K h( \bar u(r))+{\epsilon} \qquad & \mbox{in } (0,R),\\
u (R) = 0. & 
\end{cases}
\end{equation}
Notice that, if \eqref{oss} does not hold, the problem above has a blow up for some point in $[0,R)$.\\
Quite interestingly, the test functions based approach outlined before Theorem \ref{generalh} echoes the procedure of solving \eqref{onedim} by separation of variables, see Remarks \ref{explicit1} and \ref{explicit2} .\\

Finally, let us focus on the problem 
\begin{equation} \label{zeroorderterm}
\begin{cases}
 -\div(M(x)\nabla u)+\mu\, u= -\div(u|u|^{\theta}E(x))+f(x) \qquad & \mbox{in } \Omega,\\
u (x) = 0 & \mbox{on }  \partial \Omega.
\end{cases}
\end{equation}
Beyond the interest in studying \eqref{zeroorderterm} due to its connection to the evolutionary problem, here we want to highlight the gain in the growth at infinity of the nonlinearity  $h(s)$ allowed by the presence of the zeroth order term $\mu u$. Indeed,  we have the following result.
\begin{teo}\label{+uteo}
Assume \eqref{alfa}, take $\mu>0$, $\theta<1/N$,  $E\in [\elle r]^N$   with  $r=\frac{N}{1-\theta N}$ and $f\in \elle {2_*}$. Then, there exists a unique $u\in \w12$ weak solution of problem \eqref{zeroorderterm}.
\end{teo}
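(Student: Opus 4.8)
I would run the standard scheme of \emph{approximation, uniform a priori estimates, passage to the limit, uniqueness}; the one genuinely new ingredient is the way the zeroth order term $\mu u$ is exploited. For $n\in\mathbb N$ set $h_n(s):=T_n(s)\,|T_n(s)|^{\theta}$, which is bounded, Lipschitz, vanishes at $0$, satisfies $|h_n(s)|\le|s|^{1+\theta}$, and converges to $h(s)=s|s|^{\theta}$ locally uniformly. Since $h_n$ is bounded, $v\mapsto h_n(v)E$ is continuous and compact from $\w12$ into $[\elle2]^N$, so $u\mapsto-\div(M(x)\D u)+\mu u+\div(h_n(u)E)$ is a compact perturbation of a coercive isomorphism; testing with the solution itself yields the a priori bound needed to apply Schaefer's (or Schauder's) fixed point theorem, giving $u_n\in\w12$ with
\[
-\div(M(x)\D u_n)+\mu\,u_n=-\div\big(h_n(u_n)E\big)+f\quad\text{in }\Omega .
\]
All test functions used below are Lipschitz functions of $u_n$ vanishing at $0$, hence admissible.

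\emph{Step 1: a uniform $\elle1$ bound.} I would test the $n$-th equation with $\varphi(u_n)$, where $\varphi(t):=\int_0^t(|s|^{1+\theta}+1)^{-2}\,ds$. With $H(t):=\int_0^t(|s|^{1+\theta}+1)^{-1}\,ds$ one has $\varphi'=(H')^2$, so the principal part produces at least $\alpha\|\D H(u_n)\|_{\elle2}^2$; since $|h_n(u_n)\varphi'(u_n)|\le(|u_n|^{1+\theta}+1)^{-1}=H'(u_n)$, the convection term is at most $\io|E|\,|\D H(u_n)|\le\|E\|_{\elle2}\|\D H(u_n)\|_{\elle2}$; and, $\varphi$ being odd, increasing and bounded, $t\varphi(t)\ge0$ everywhere and $t\varphi(t)\ge c_0|t|$ on $\{|t|\ge1\}$ with $c_0:=\varphi(1)>0$, so the zeroth order term contributes at least $\mu c_0(\|u_n\|_{\elle1}-|\Omega|)$, while $\io f\varphi(u_n)\le\|\varphi\|_{L^\infty}\|f\|_{\elle1}$. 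Absorbing the gradient term gives $\|u_n\|_{\elle1}\le C_1$ with $C_1$ depending only on $\alpha,\mu,\theta,|\Omega|,\|E\|_{\elle2},\|f\|_{\elle1}$, and hence, by Chebyshev, $|\{|u_n|>k\}|\le C_1/k$ uniformly in $n$. This is precisely where $\mu u$ enters: the decay estimate of Lemma~\ref{decay} is vacuous here because $H$ is bounded (this is the superlinearity), but the same test function together with $\mu u$ still yields a uniform $\elle1$ bound and therefore uniform smallness of the high level sets.

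\emph{Step 2: the uniform $\w12$ bound — the delicate step.} Testing with $u_n$,
\[
\alpha\|\D u_n\|_{\elle2}^2+\mu\|u_n\|_{\elle2}^2\le\io|u_n|^{1+\theta}|E|\,|\D u_n|+\mathcal S\|f\|_{\elle{2_{*}}}\|\D u_n\|_{\elle2}.
\]
Because $r=N/(1-\theta N)$ is exactly critical, Hölder gives $\io|u_n|^{1+\theta}|E|\,|\D u_n|\le\|u_n\|_{\elle q}^{1+\theta}\|E\|_{\elle r}\|\D u_n\|_{\elle2}$ with $q=\frac{2(1+\theta)N}{N-2+2\theta N}\in(1,2^{*})$, and the exponent $1+\theta>1$ blocks a direct absorption. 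I would split the convection integral at a level $k$ into $D_1(k)$ over $\{|u_n|\le k\}$ and $D_2(k)$ over $\{|u_n|>k\}$. On $\{|u_n|>k\}$, repeating the Hölder inequality and interpolating the $L^q$ norm between $L^1$ and $L^{2^{*}}$ (the exponents match exactly, with interpolation weight $\tfrac1{1+\theta}$, so that $\|u_n\|_{L^q(\{|u_n|>k\})}^{1+\theta}\le\|u_n\|_{L^1(\{|u_n|>k\})}^{\theta}\|u_n\|_{\elle{2^{*}}}$), one gets $D_2(k)\le C_1^{\theta}\mathcal S\,\|E\|_{L^r(\{|u_n|>k\})}\,\|\D u_n\|_{\elle2}^2$. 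By absolute continuity of $\int_A|E|^r$ and $|\{|u_n|>k\}|\le C_1/k$, there is a level $k$ depending only on $\alpha,\mathcal S,C_1,\|E\|_{\elle r},\theta$ (data, not $n$) with $C_1^{\theta}\mathcal S\|E\|_{L^r(\{|u_n|>k\})}\le\alpha/4$, so $D_2(k)\le\tfrac\alpha4\|\D u_n\|_{\elle2}^2$. With this $k$ fixed, $|u_n|^{1+\theta}\le k^{1+\theta}$ on $\{|u_n|\le k\}$, whence $D_1(k)\le k^{1+\theta}\|E\|_{\elle2}\|\D u_n\|_{\elle2}\le\tfrac\alpha4\|\D u_n\|_{\elle2}^2+\tfrac{k^{2+2\theta}\|E\|_{\elle2}^2}{\alpha}$, and the datum term is $\le\tfrac\alpha4\|\D u_n\|_{\elle2}^2+\tfrac{\mathcal S^2}{\alpha}\|f\|_{\elle{2_{*}}}^2$. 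Collecting, $\tfrac\alpha4\|\D u_n\|_{\elle2}^2+\mu\|u_n\|_{\elle2}^2\le C_3$ with $C_3$ independent of $n$, and no smallness of the data is needed: the critical convection term is controlled by the $\elle1$ bound on $\{|u_n|>k\}$ and by its sub-quadratic character on $\{|u_n|\le k\}$.

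\emph{Step 3: limit and uniqueness.} Up to a subsequence $u_n\rightharpoonup u$ in $\w12$, $u_n\to u$ in $\elle2$ and a.e.; since $h_n(u_n)\to u|u|^{\theta}$ a.e. and, by Hölder, $h_n(u_n)E$ is bounded in $[\elle{p_0}]^N$ with $p_0=\frac{2N}{N-\theta(N+2)}>2$, it converges weakly there to $u|u|^{\theta}E\in[\elle2]^N$, and passing to the limit in the weak formulation (the remaining terms being routine) shows $u$ solves \eqref{weakformulation}. For uniqueness, the difference $w=u-v$ of two solutions solves the \emph{linear} noncoercive problem $-\div(M(x)\D w-\Phi(x)\,w\,E)+\mu w=0$ with $\Phi(x)=(1+\theta)\int_0^1|v+\tau(u-v)|^{\theta}\,d\tau\ge0$ and $\Phi E\in[\elle t]^N$, $\tfrac1t=\tfrac1N-\tfrac{\theta(N+2)}{2N}$, i.e.\ $t>N$ (since $\theta>0$); by the theory of linear equations with a drift of summability larger than $N$ — equivalently, by the comparison principle of Theorem~\ref{comparison} — this forces $w\equiv0$. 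I expect \textbf{Step 2 to be the main obstacle}: the convection term is critical for the stated integrability of $E$, so a plain energy estimate leaves a term of total degree $2+\theta$ in $u$ which cannot be absorbed; it is the zeroth order term $\mu u$, through the $\elle1$ bound of Step 1, that makes the truncation argument self-contained and removes any smallness assumption, while the hypothesis $\theta<1/N$ is exactly what keeps all the Hölder and interpolation exponents admissible (in particular $\theta(N+2)<2$).
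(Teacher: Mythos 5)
Your proposal is correct and follows essentially the same route as the paper: a uniform $\elle 1$ bound extracted from the zeroth order term $\mu u$ (the paper's Lemma \ref{elle1estimate}, obtained there by testing with $T_{\epsilon}(u_n)/\epsilon$ rather than with your bounded primitive $\varphi$), then a level-set splitting of the convection integral at a height $k$ fixed via the equi-integrability of $|E|^r$ and the Chebyshev decay $|\{|u_n|>k\}|\le C/k$, with exactly the same H\"older bookkeeping $\tfrac{1}{2^*}+\theta+\tfrac1r+\tfrac12=1$, followed by Lemma \ref{passtothelim} and Theorem \ref{comparison}. The only cosmetic difference is that the paper splits the function ($u_n=T_k(u_n)+G_k(u_n)$, testing separately with $G_k(u_n)$ and $T_k(u_n)$) while you split the domain of integration; the two are equivalent.
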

In other words, the operator $u\to -\div(M(x)\nabla u)+\mu u$ allows for a power growth below $1+\frac1N$. We do not know if Theorem \ref{+uteo} holds when the threshold valued is reached, but a simple computation in the radial case shows that problem \eqref{zeroorderterm} is not well posed for general $f$ and $E$ if $\theta>\frac1N$ (see Remark \ref{lastrem}).

The key point in the proof of the previous result is that the presence of the zeroth order term implies an a priori $\elle 1$-estimate for solution to \eqref{zeroorderterm} of the following type  
\[
\io |u|\leq\frac{\norma{f}{1}}{\mu},
\]
see Lemma \ref{elle1estimate} below. It is worthy to notice that such estimate does not depend on the ellipticity constant of the matrix $M(x)$.

\noindent
Our last result takes advantage of such $\elle1$-estimate to provide a $\log$-scale improvement of Theorem \ref{logarithmich}.
\begin{teo}\label{logsummE}
Let the assumption \eqref{alfa} be satisfied,   $E\in [L^N\log^N(\Omega)]^N$ and $f\in \elle {2_*}$. Then there exists a unique solution of the problem 
\begin{equation}
\label{problemalog}
\begin{cases}
 -\div(M(x)\nabla u)+\mu\, u= -\div( u\log(e+|u|)E(x))+f(x) \qquad & \mbox{in } \Omega,\\
u (x) = 0 & \mbox{on }  \partial \Omega.
\end{cases}
\end{equation}
\end{teo}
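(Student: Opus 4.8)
The plan is to regard Theorem~\ref{logsummE} as a logarithmic-scale sharpening of Theorem~\ref{logarithmich}: the coercive term $\mu u$ will be used only through the a priori bound $\io|u|\le\norma f1/\mu$ of Lemma~\ref{elle1estimate}, and this is exactly the extra input that allows one to relax the hypothesis $E\in[\elle r]^N$, $r>N$, to the (strictly larger) Zygmund class $E\in[L^N\log^N(\Omega)]^N$. Concretely I would: (i) solve the problems obtained by truncating $E$; (ii) derive estimates uniform in the truncation, the decisive one being a bound in $\w12$; (iii) pass to the limit in the weak formulation; (iv) deduce uniqueness from the comparison principle, Theorem~\ref{comparison}.

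\emph{Approximation and elementary bounds.} For $n\in\N$ put $E_n=E\,\mathbf 1_{\{|E|\le n\}}\in[\elle\infty]^N$. Since $E_n$ is bounded, a minor variant of the proof of Theorem~\ref{logarithmich} — incorporating the zeroth order term $\mu u$, whose contribution has a favourable sign in every a priori estimate and hence only simplifies matters — yields a weak solution $u_n\in\w12$ of \eqref{problemalog} with $E$ replaced by $E_n$. As $h(s)=s\log(e+|s|)$ is odd, vanishes at $0$, and $f\in\elle{2_*}\subset\elle1$, Lemma~\ref{elle1estimate} gives the $n$-uniform bound $\io|u_n|\le\norma f1/\mu$; by Chebyshev's inequality $|\{|u_n|>k\}|\le\norma f1/(\mu k)$ for every $k>0$, uniformly in $n$. (One also has the $n$-uniform superlevel-set control coming from Lemma~\ref{decay}, valid here since $H(s)\simeq\log\log(e+s)$ satisfies \eqref{growthg}.) In particular, since $|E|^N\log^N(e+|E|)\in\elle1$, the quantity $\int_{\{|u_n|>k\}}|E|^N\log^N(e+|E|)$ tends to $0$ as $k\to\infty$, uniformly in $n$.

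\emph{The uniform $\w12$ estimate — the main obstacle.} Testing the $n$-th equation with $\Gk(u_n)$ and $\Tk(u_n)$ and dropping the nonnegative zeroth order terms, everything reduces to controlling the drift contribution $\int_{\{|u_n|>k\}}|u_n|\log(e+|u_n|)\,|E|\,|\D u_n|$. The mechanism is that the exponent $N$ on the logarithm in $E\in[L^N\log^N(\Omega)]^N$ is precisely what compensates the logarithmic growth of $h$: using Hölder's inequality with exponents $2^*,N,2$ together with the Young-type inequality for the complementary Zygmund/exponential pair,
\[
\int_{\{|u_n|>k\}}|E|^N\log^N(e+|u_n|)\;\le\;\varepsilon\io|E|^N\log^N(e+|E|)+C_\varepsilon\int_{\{|u_n|>k\}}(e+|u_n|),
\]
whose last term is uniformly bounded thanks to the $L^1$ estimate, and then exploiting that $\|E\,\mathbf 1_{\{|u_n|>k\}}\|$ (in the relevant Zygmund norm) is uniformly small for $k$ large, one absorbs the drift into $\alpha\io|\D\Gk(u_n)|^2$ and closes the estimate; the truncated part is standard because $|h(u_n)|\le k\log(e+k)$ on $\{|u_n|\le k\}$. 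The delicate point I expect here is the bookkeeping — choosing the truncation level $k$ in terms of the data, and checking that the borderline Zygmund pairing really does leave a remainder absorbable uniformly in $n$ — and it is at this exact spot that $E\in L^N\log^N$ (as opposed to $E\in\bigcup_{r>N}\elle r$) forces the presence of the coercive term $\mu u$.

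\emph{Passage to the limit and uniqueness.} With $\sup_n\norma{u_n}{\w12}<\infty$ one extracts $u_n\rightharpoonup u$ in $\w12$ and $u_n\to u$ a.e.\ and in $\elle q$ for $q<2^*$. Rerunning the Zygmund computation above with $\norma{u_n}{\w12}$ now bounded shows that $h(u_n)E_n$ is bounded in $[\elle2]^N$; being a.e.\ convergent to $h(u)E$, it then converges weakly to $h(u)E$ in $[\elle2]^N$ (Vitali), which suffices to pass to the limit in the weak formulation of \eqref{problemalog} written with $E_n$, and to conclude that $u\in\w12$ is a weak solution. Finally, applying Theorem~\ref{comparison} to two solutions of the same problem gives $u_1\le u_2$ and $u_2\le u_1$, hence uniqueness.
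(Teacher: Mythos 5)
Your overall architecture (truncate, use Lemma \ref{elle1estimate} as the extra input that replaces the hypothesis $r>N$, a Young-type inequality for the complementary pair $s\log(e+s)$ / $e^{t}$, absorption into the ellipticity term, Vitali, comparison) is the same as the paper's, and your approximation, passage to the limit and uniqueness steps are fine. The gap is in the central absorption step, in the displayed inequality
\begin{equation*}
\int_{\{|u_n|>k\}}|E|^N\log^N(e+|u_n|)\;\le\;\varepsilon\io|E|^N\log^N(e+|E|)+C_\varepsilon\int_{\{|u_n|>k\}}(e+|u_n|).
\end{equation*}
First, the inequality $st\le As\,\log(e+s)+Ae^{t/A}$ puts a \emph{large} constant ($A^N$, with $A>N$) in front of the Zygmund term, not a small $\varepsilon$: the pointwise bound $|E|^N\log^N(e+u)\le\varepsilon|E|^N\log^N(e+|E|)+C_\varepsilon(e+u)$ fails for $u\sim|E|\to\infty$ once $\varepsilon<1$. (The smallness of that term must instead come, as you say elsewhere, from restricting to $\{|u_n|>k\}$, whose measure is $\le \norma{f}{1}/(\mu k)$ by Chebyshev, together with the equi-integrability of the fixed $L^1$ function $|E|^N\log^N(e+|E|)$; that part is recoverable.) Second, and more seriously: after H\"older with exponents $2^*,N,2$ and Sobolev, the quantity $\bigl(\int_{\{|u_n|>k\}}|E|^N\log^N(e+|u_n|)\bigr)^{1/N}$ ends up multiplying $\io|\D G_k(u_n)|^2$, so to absorb it into $\alpha\io|\D G_k(u_n)|^2$ you need it \emph{small}, not merely bounded. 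Your remainder $C_\varepsilon\int_{\{|u_n|>k\}}(e+|u_n|)$ is only bounded by $C_\varepsilon(e|\Omega|+\norma{f}{1}/\mu)$ and does not tend to $0$ as $k\to\infty$ uniformly in $n$ (the $L^1$ bound gives no uniform integrability of $\{u_n\}$). As written, the estimate does not close.

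The step can be repaired within your own scheme by not degrading $(e+|u_n|)^{N/A}$ to $e+|u_n|$: keeping the exponent $N/A<1$ and applying H\"older on the small set gives $\bigl(\int_{\{|u_n|>k\}}(e+|u_n|)^{N/A}\bigr)^{1/N}\le\|e+|u_n|\|_{\elle1}^{1/A}\,|\{|u_n|>k\}|^{1/N-1/A}$, which \emph{does} vanish as $k\to\infty$ uniformly in $n$. The paper sidesteps the issue differently: it splits the drift term on $\{|E|\le\rho\}$ versus $\{|E|>\rho\}$, a set independent of $n$. On the first set the $L^1$ bound converts the drift into a sublinear quantity $C\rho^2\|\D u_n\|_{\elle2}^{2-\lambda}$, which requires no absorption at all; on the second, the two coefficients $\bigl(\int_{\{|E|>\rho\}}|E|^N\log^N(e+|E|)\bigr)^{2/N}$ and $\|1+|u_n|\|_{\elle1}^{2/A}|\{|E|>\rho\}|^{2/N-2/A}$ both tend to $0$ as $\rho\to\infty$, uniformly in $n$, and absorption goes through.
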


\subsection{Bibliographic background}
From a broader perspective, problem \eqref{problem} can be seen as the stationary counterpart of the following nonlinear continuity equation
\be\label{evol}
\partial_t u +\mbox{div}(J)=f, \ \ \ J=-M(x)\nabla u+h(u)E(x),
\ee
where the flux $J$ is made up of the diffusion term $-M(x)\nabla u$ and the (possibly) nonlinear convection term $h(u)E(x)$. Such type of equation can be derived as hydrodynamic limit of a family of scaled kinetic equations \cite{degond}, is widely used in biology \cite{hillen} and pedestrians dynamics \cite{iuorio}, and naturally arises in the mean field games theory (see for instance \cite{porrettacon,porretta}). Clearly, if we take $h(u)\equiv u$, we recover the celebrated Fokker-Planck equation. We also mention \cite{gomezcastro} for a recent review on the related class of aggregation-diffusion equations and \cite{cornalba} for a glimpse in the stochastic framework.\\ 

Let us now go back to our stationary problem. Notice that the coefficients of our equation satisfy mild regularity assumptions: the  matrix $M(x)$ is measurable and positive definite and the vector field $E(x)$ simply belongs to $\elle r$. It doesn't have any sort of differentiability nor sign condition that may help for a priori estimates. In particular we do not assume that $E=\nabla V$, for some potential $V$, so that no variational structure is available for us (in such a case \eqref{evol} may be reformulated as a Wasserstein-type gradient flow).\\

Form our perspective the main feature of the operator 
\[
u\to  -\div(M(x)\nabla u)+\div(h(u)E(x))
\]
is the loss of coercivity due to the lower order term. For $h$ with linear growth, different approaches have been developed to overcome such a difficulties. The authors of \cite{betta,delvecchio} apply a symmetrization procedure to deduce an explicit pointwise bound for the decreasing rearrangement of the solution. As we already mentioned, in \cite{Bumi2009,Bumi2012} a log-type estimate allows to deduce a universal decay on the super level set of the solution. This decay, combined with a truncation argument, provides a priory estimates in $\w12$. We also mention \cite{bbc,goffi} for estimates obtained by duality method, \cite{bucc,cirmi2020,cirmi2022,feng,sake} for regularity results in different frameworks, and \cite{jde,farroni2021,farronibis} for equation with coefficients in Marcinkiewicz spaces.\\

 Our work follows this line of investigation with the novelty of a convection term with superlinear growth at infinity. Up to our knowledge, very little has been done in this direction (see the very recent \cite{marah}) and the slightly superlinear growth condition \eqref{growthg} has never been used in this context. The legitimacy of such assumption is provided by Proposition \ref{radialcase} and it represents a nice bridge between the well posedness of problem \eqref{problem} and the blow-up in finite time for the ODE \eqref{onedim}. It would be interesting to understand if Theorem \ref{logarithmich} remains true for a general $h(u)$ that satisfies \eqref{growthg}. 

Also the {strongly superlinear} case $h(s)=s|s|^{\theta}$, treated in Theorems \ref{fixedpoint} and \ref{problembis}, does not seem to be addressed in the literature so far. We believe that it may be of some interest to compare problems \eqref{problembis} and \eqref{zeroorderterm} with the following one
\begin{equation} \label{dquadro}
\begin{cases}
 -\div(M(x)\nabla u)+\mu u= |\nabla u|^q+f(x) \qquad & \mbox{in } \Omega,\\
u (x) = 0 & \mbox{on }  \partial \Omega.
\end{cases}
\end{equation}
with $q\in(1,2]$ and $f\ge0$. The literature on \eqref{dquadro} is huge, see for instance \cite{tom, betta1, bmp84, bmp, grenon, goffi, phuc} and reference therein. One of the most interesting fact about this problem is the nontrivial structure of the set of solutions with respect to the parameter $\mu$ (see \cite{david} and the more recent papers \cite{salva}, \cite{antonio}). Let us recall that, if $\mu=0$, problem \eqref{dquadro} is solvable in $\w12$ if and only if $f$ satisfies some smallness condition, and this is in a kind of analogy with our Problem \ref{problembis}. However, if $\mu >0$, problem \eqref{dquadro} enjoys (for $f\in \elle m$, $m>N/2$) a universal $L^{\infty}$-bound on solutions that is missing in our case when $\theta> \frac1N$, as shown in Remark \ref{lastrem}. This is somehow surprising since a term like $|\nabla u|^2$ seems at first glace much heavier than one like $-$div$(u|u|E)$.\\

We close the introduction with a couple of further comments on \eqref{evol}. Indeed, the next natural step would be to consider the evolutionary counterpart of \eqref{problem} and to address wellpodedness, local/global existence, blow-up (if any), and decay properties of the solution. In the case $h(u)=u$ such issues are treated for instance in \cite{bop}, \cite{bopbis} and \cite{farronibis}. For the special choice $h(u)=u|u|^{\theta}$ a preliminary study is contained in [].

\section{Proof of the results}

Let us start with some notation and  preliminary material that we shall use in what follows. Many of our results are based on the approximation of \eqref{problem} with problems with \emph{trouncated} right hand side (Theorem \ref{fixedpoint} will be proved with a fixed point argument instead). To be more explicit, let us recall the following definitions
\[
T_k(s)=\max\{-k,\min\{s,k\}\}, \ \ \ G_k(s)=s-T_k(s), \quad \mbox{ for } k>0.
\]
Using   Schauder fixed point theorem, we can prove that, for any $n\in\mathbb{N}$,  there exists $\un \in \w12$ which solves
\be\label{approx}
\io M(x)\nabla \un\nabla v 
+\io \mu\un v
=\io h_n(u_n) E_n(x)\nabla v+\io f_n\,v , \quad \forall\; v \in \w12,
\ee
where $h_n(s)= T_n(h(s))$, $f_n= T_n(f)$, and, with a slight abuse of notation, $E_n$ is the vector field with components $(E_n)^i=T_n((E)^i)$, with $i=1,\cdots,N$.\\
Moreover, by the classical Stampacchia's regularity result (see \cite{stamp}), we have that  $\un \in \elle{\infty}$  (notice that for any fixed $n$ the term $ h_n(\un) E_n(x)$ is bounded).\\

The main difficulty here is to achieve a priori estimates for the sequence $\{\un\}$ in some suitable space, and this is done following different strategies depending on the choice of $h(s)$ and on the presence of the zeroth lower order term. Once these estimates are obtained, the passage to the limit in \eqref{approx} always follows the same procedure, that we provide in the following Lemma.
\begin{lemma}\label{passtothelim}
Assume \eqref{alfa}-\eqref{ipoh}, $\mu\ge 0$, that $E\in [\elle 2]^N$, $f\in \elle{\frac{2N}{N+2}}$, and that 
\[
\|\un\|_{\w12}+\|h(\un) E\|_{\elle 2}\le C_1.
\] 
Theerefore, up to a subsequence, $\un\rightharpoonup u$ in $\w12$ and $u$ solves
\[
\io M(x)\nabla u \nabla v 
+\io \mu u v
=\io h(u) E(x)\nabla v+\io fv  \quad \forall\; v \in \w12.
\]
\end{lemma}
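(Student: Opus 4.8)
The plan is to extract compactness from the a priori bound and then pass to the limit in \eqref{approx} term by term, the only nontrivial passage being the one in the nonlinear convection term.

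First I would invoke the uniform bound $\|\un\|_{\w12}\le C_1$: up to a subsequence, $\un\rightharpoonup u$ in $\w12$, and by the Rellich--Kondrachov theorem $\un\to u$ strongly in $\elle q$ for every $q<2^*$ and, up to a further subsequence, a.e.\ in $\Omega$. Since $h\in W^{1,\infty}_{loc}(\mathbb{R})$ is in particular continuous, $h(\un)\to h(u)$ a.e.; moreover $h_n=T_n\circ h$ and, for a.e.\ $x$, $h(\un(x))\to h(u(x))\in\mathbb{R}$, so that $h_n(\un(x))=h(\un(x))$ for $n$ large, whence $h_n(\un)\to h(u)$ a.e.\ in $\Omega$. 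Likewise $|E_n|\le|E|$ and $E_n\to E$ a.e., so by dominated convergence $E_n\to E$ in $[\elle 2]^N$; and $|\fn|\le|f|$, $\fn\to f$ a.e., so $\fn\to f$ in $\elle{2_*}$.

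The heart of the matter — and the step I expect to be the main obstacle — is the term $\io h_n(\un)E_n\nabla v$. Pointwise one has $|h_n(\un)E_n|\le|h(\un)|\,|E|=|h(\un)E|$, hence by assumption $\|h_n(\un)E_n\|_{[\elle 2]^N}\le\|h(\un)E\|_{[\elle 2]^N}\le C_1$, so $\{h_n(\un)E_n\}$ is bounded in $[\elle 2]^N$ and converges a.e.\ to $h(u)E$; therefore $h_n(\un)E_n\rightharpoonup h(u)E$ in $[\elle 2]^N$ (a bounded sequence in $L^2$ which converges a.e.\ converges weakly to its pointwise limit). This is precisely where the hypothesis $\|h(\un)E\|_{[\elle 2]^N}\le C_1$ — rather than mere a.e.\ convergence — is indispensable, since it rules out concentration. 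In particular $h(u)E\in[\elle 2]^N$ (which also follows directly from Fatou's lemma), so that the limit identity is meaningful as a weak formulation in the sense of \eqref{weakformulation}; and since $\nabla v\in[\elle 2]^N$ for fixed $v\in\w12$, we conclude $\io h_n(\un)E_n\nabla v\to\io h(u)E\nabla v$.

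The remaining passages are routine. Since $\nabla\un\rightharpoonup\nabla u$ in $[\elle 2]^N$ and $M(x)\nabla v\in[\elle 2]^N$, we get $\io M(x)\nabla\un\nabla v\to\io M(x)\nabla u\nabla v$; since $\un\to u$ in $\elle 2$ and $v\in\elle 2$, we get $\mu\io\un v\to\mu\io u v$; and since $\fn\to f$ in $\elle{2_*}$ while $v\in\w12\hookrightarrow\elle{2^*}=(\elle{2_*})'$, we get $\io\fn v\to\io f v$. Passing to the limit in \eqref{approx} along the chosen subsequence then yields the stated identity for all $v\in\w12$, which completes the argument.
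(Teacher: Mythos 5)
Your argument is correct and follows essentially the same route as the paper: weak $W^{1,2}_0$ compactness, strong $L^q$ and a.e.\ convergence, and then exploiting the uniform $L^2$ bound on $h_n(\un)E_n$ (via $|h_n(\un)E_n|\le|h(\un)E|$) together with its a.e.\ convergence to pass to the limit in the convection term. The only cosmetic difference is the concluding device: you invoke the standard ``bounded in $L^2$ plus a.e.\ convergence implies weak $L^2$ convergence to the pointwise limit'' lemma and pair the weak limit with $\nabla v\in[\elle 2]^N$, whereas the paper packages the same information as equi-integrability of $\{h_n(\un)E_n\nabla v\}$ and applies Vitali's theorem; both are valid.
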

\begin{proof}
Since $\{\un\}$ is bounded in $\w12$, we deduce that, up to a subsequence, $\un\to u$ weakly in $\w12$, strongly in $\elle{q}$ with $q<2^*$, and $a.e$ in $\Omega$. This is enough to pass to the limit in the first, in the second, and in the fourth integral of \eqref{approx}. To deal with the third one, notice that, for any measurable $\Sigma\subset\Omega$, we have that
\[
\int_\Sigma
|h_n(\un)E_n(x)\nabla v|
\leq
\bigg(\io|h(\un)E|^2\bigg)^\frac{1}{2}
\bigg(\int_{\Sigma}|\nabla v|^2\bigg)^\frac12\le C_1 \bigg(\int_{\Sigma}|\nabla v|^2\bigg)^\frac12.
\]
This amounts to say that, for any given $v\in W^{1,2}_0(\Omega)$, the term $\{h_n(\un)E_n\nabla v\}$ is equi-integrable. Since $\un\to u$ converges pointwise, we can use Vitali Theorem to conclude that also the third term in \eqref{approx} passes to the limit as $n\to\infty$.
\end{proof}

The next result provides us a comparison principle.

\begin{teo}\label{comparison}
Under conditions \eqref{alfa} and \eqref{ipoh}, let us consider $\mu\ge0$, $f\in\elle{\frac{2N}{N+2}}$, $E\in [\elle 2]^N$, and two functions $v,w\in \w12\cap \elle{\infty}$ such that 
\[
\io M(x)\nabla v \nabla \phi 
+\io \mu v \phi
\le \io h(v) E(x)\nabla \phi+\io f\phi  \quad \forall\; v \in \w12,
\]
and
\[
\io M(x)\nabla w \nabla \phi 
+\io \mu w \phi
\ge \io h(w) E(x)\nabla \phi+\io f\phi  \quad \forall\; v \in \w12,
\]
for any $0\le \phi\in\w12\cap \elle{\infty}$. Therefore $v\le w$ $a.e.$ in $\Omega$.\\
Moreover, if $v,w$ are in $\w12$ (possibly unbounded) and we additionally assume that, for $\theta>0$,
\[
|h'(s)|\le c(|s|^{\theta}+1)\, a.e. \mbox{ in } \mathbb R \quad \mbox{ and } \quad |v|^{\theta}|E|+|w|^{\theta}|E|\in\elle 2,
\]
we can again conclude that $v\le w$ $a.e.$ in $\Omega$.\\
\end{teo}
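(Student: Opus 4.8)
The plan is to prove the comparison principle by testing the difference of the two inequalities with a well-chosen nonnegative test function supported on the set $\{v>w\}$, and then exploiting ellipticity together with the local Lipschitz character of $h$ to absorb the convection term.

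First, for the bounded case: set $z=v-w\in\w12\cap\elle\infty$ and use $\phi=\Tk(z^+)$, or more simply $\phi=z^+$ itself (it is an admissible test function since $z^+\in\w12\cap\elle\infty$ and $z^+\ge0$). Subtracting the weak inequality for $w$ from that for $v$ yields
\[
\io M(x)\nabla(v-w)\nabla z^+ +\io\mu(v-w)z^+\le\io (h(v)-h(w))E(x)\nabla z^+.
\]
On $\{z>0\}=\{v>w\}$ we have $\nabla z^+=\nabla v-\nabla w$, so the first term is $\ge\alpha\io|\nabla z^+|^2$ by \eqref{alfa}, and the zeroth order term is $\ge\mu\io(z^+)^2\ge0$. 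For the right-hand side, since $v,w\in\elle\infty$ and $h\in W^{1,\infty}_{loc}(\mathbb R)$, there is a constant $L$ (depending on $\|v\|_\infty,\|w\|_\infty$) with $|h(v)-h(w)|\le L|v-w|=Lz^+$ on $\{z>0\}$. Hence
\[
\alpha\io|\nabla z^+|^2\le L\io z^+|E||\nabla z^+|.
\]
This is where the main obstacle lies: the factor $z^+$ on the right is exactly what makes the estimate non-trivial, and one cannot simply bound it by a constant. The resolution is to iterate on truncations, or better, to use $\phi=\Gk(z^+)$ for $k>0$ — its gradient is supported on $\{z^+>k\}$, whose measure tends to $0$ as $k\to\infty$, so by Hölder and Sobolev the right-hand side becomes $L\|E\|_{\elle r}$ times a quantity with a small-measure factor that can be absorbed into the left-hand side for $k$ large; this forces $|\{z^+>k\}|=0$ for large $k$, i.e. $z^+\in\elle\infty$ with the bound propagating down. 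Alternatively, and more cleanly, one tests with $\phi=(e^{\lambda z^+}-1)$ type functions or with $\phi = z^+$ after noting that on $\{v>w\}$ one has $h(v)-h(w)=\big(\int_w^v h'\big)$, and uses that in $\w12\cap\elle\infty$ the standard Stampacchia-type argument (test with $\Tk(G_j(z^+))/k$ and let $k\to0$) closes the gap. I would present the $\Gk$-truncation argument as the cleanest route.

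For the second (possibly unbounded) case, the structural assumption $|h'(s)|\le c(|s|^\theta+1)$ gives $|h(v)-h(w)|\le c(|v|^\theta+|w|^\theta+1)|v-w|$, and the extra hypothesis $|v|^\theta|E|+|w|^\theta|E|\in\elle2$ ensures that the field $\widetilde E:=c(|v|^\theta+|w|^\theta+1)E$ belongs to $[\elle2]^N$. Testing the subtracted inequality with $\phi=\Tk(z^+)$ (admissible since $z\in\w12$ and the truncation is bounded), using ellipticity on the left and $|h(v)-h(w)||E|\le|\widetilde E|$ on $\{z>0\}$ on the right, gives
\[
\alpha\io|\nabla\Tk(z^+)|^2\le\io|\widetilde E|\,\Tk(z^+)\,|\nabla\Tk(z^+)|\le\|\widetilde E\|_{\elle2}\Big(\io \Tk(z^+)^2\Big)^{1/2}\Big(\io|\nabla\Tk(z^+)|^2\Big)^{1/2},
\]
wait — the factor $\Tk(z^+)\le k$ but that is still not uniformly small. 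The right move here is the classical one for non-coercive comparison: test instead with $\phi = \frac{\Tk(z^+)}{k}$ and observe the gradient term lives on $\{0<z\le k\}$, or test with $\phi$ such that $\nabla\phi$ is supported where $z^+$ is small; letting the truncation parameter shrink, the factor multiplying $|E|$ goes to zero pointwise while staying dominated, and Vitali/dominated convergence forces $\nabla z^+=0$ a.e. on $\{v>w\}$. Since $z^+\in\w12_0$, Poincaré then gives $z^+=0$, i.e. $v\le w$ a.e.

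In summary: the key steps are (i) subtract the two weak inequalities and test with a nonnegative function supported on $\{v>w\}$; (ii) use \eqref{alfa} to get $\alpha\io|\nabla z^+|^2$ (or its truncated version) on the left and drop the nonnegative $\mu$-term; (iii) use the (local) Lipschitz bound on $h$ to write the convection term as $\int|\widetilde E|\,z^+|\nabla z^+|$ with $\widetilde E\in[\elle2]^N$; (iv) exploit that the relevant region $\{v>w\}$ can be probed on a set where $z^+$ is arbitrarily small (via truncations $\Gk$ or rescaled $\Tk$), so the convection term is absorbed and one concludes $\nabla z^+=0$, hence $z^+=0$. The main obstacle is precisely step (iv): unlike a genuinely coercive problem, one cannot hide the convection term under a single small-norm hypothesis on $E$, so the argument must localize on superlevel sets of $z^+$ and use that their measure (or the value of $z^+$ there) is controlled — this is the same philosophy as the decay estimate \eqref{decayintro} used elsewhere in the paper.
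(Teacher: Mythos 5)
You have the right starting point (subtract the two inequalities and test with a nonnegative function built from $z=(v-w)^+$), and you correctly identify that the factor $z^+$ multiplying $|E|$ is the crux. But neither of the two routes you actually propose for closing the argument works, and the one the paper uses is different from both. First, your preferred route for the bounded case --- testing with $G_k(z^+)$ and absorbing the convection term because $|\{z^+>k\}|$ is small for large $k$ --- fails twice over: the Hölder--Sobolev absorption on a small-measure set requires $E\in[L^N(\Omega)]^N$ at least, whereas the theorem only assumes $E\in[\elle2]^N$; and even if it went through, a Stampacchia iteration of that type would only yield $z^+\in\elle\infty$, which is already known and does not give $z^+=0$ (there is no ``bound propagating down''). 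Second, in your $T_k$-route the concluding claim that ``dominated convergence forces $\nabla z^+=0$ a.e.\ on $\{v>w\}$'' does not follow: testing with $T_\eps(z^+)$ only produces information about $\nabla z^+$ on the shrinking slab $\{0<z^+<\eps\}$, and a point where $z^+(x)=c>0$ leaves every such slab once $\eps<c$, so no pointwise conclusion about $\nabla z^+$ on $\{v>w\}$ is available.

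The missing step --- which is exactly how the paper closes --- is to apply Poincar\'e's inequality to $T_\eps(z^+)$ \emph{itself}, not to $z^+$ at the end. Testing with $T_\eps(z^+)$ and using Young's inequality gives
\[
\frac{\alpha}{2}\io|\nabla T_\eps(z^+)|^2\le\frac{1}{2\alpha}\int_{\{0<z^+<\eps\}}|h(v)-h(w)|^2|E|^2,
\]
and since $T_\eps(z^+)=\eps$ on $\{z^+>k\}$ for every $k\ge\eps$, Poincar\'e and Chebyshev yield
\[
\lambda_1\,\eps^2\,|\{z^+>k\}|\le\io|\nabla T_\eps(z^+)|^2\le\frac{1}{\alpha^2}\int_{\{0<z^+<\eps\}}|h(v)-h(w)|^2|E|^2.
\]
On the slab $\{0<z^+<\eps\}$ the local Lipschitz bound (bounded case) or the growth bound $|h(v)-h(w)|\le \tilde c\,z^+(|v|^\theta+|w|^\theta+1)$ (unbounded case) gives $|h(v)-h(w)|^2\le C\eps^2(\dots)$, so the $\eps^2$ cancels and one is left with $|\{z^+>k\}|\le C\int_{\{0<z^+<\eps\}}(\dots)|E|^2$ for every $\eps\le k$; letting $\eps\to0$ with $k$ fixed, the right-hand side vanishes by absolute continuity of the integral of the fixed $L^1$ function $(|v|^{2\theta}+|w|^{2\theta}+1)|E|^2$ (this is precisely where your hypothesis $|v|^\theta|E|+|w|^\theta|E|\in\elle2$ enters). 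Hence $|\{z^+>k\}|=0$ for every $k>0$ and $v\le w$. So the correct mechanism is a measure-theoretic one (cancellation of $\eps^2$ between the Poincar\'e lower bound and the Lipschitz upper bound, plus absolute continuity), not an absorption or a pointwise statement about $\nabla z^+$; as written, your proof has a genuine gap at exactly this point.
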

\begin{proof}
We shall prove that $(v-w)_+\equiv 0$ a.e. in $\Omega$. To this aim, let us use $T_{{\epsilon}}(v-w)_+$ as a test function in the inequalities satisfied by $v$ and $w$. Taking the difference, we obtain
\[
\begin{split}
\alpha\io |\nabla T_{{\epsilon}}(v-w)_+ |^2\le& \int_{0<v-w<{\epsilon}} \big(h(v)-h(w)\big) E(x)\nabla T_{{\epsilon}}(v-w)_+\\
\le& \frac{1}{2\alpha}\int_{\{0< v-w< {\epsilon}\}}\big|h(v)-h(w)\big|^2 |E(x)|^2+\frac{\alpha}2\io |\nabla T_{{\epsilon}}(v-w)_+ |^2.
\end{split}
\]
Setting $z:=(v-w)_+$ and
using Poincar\'e's inequality with its best constant $\lambda_1$ (as in Theorem 6.1 of \cite{Bumi2009}), we deduce that
\[
\lambda_1{\epsilon}^2|\{z>k\}|  \le \lambda_1 \io|T_{{\epsilon}}(z) |^2\le C\int_{\{0< z< {\epsilon}\}}\big|h(v)-h(w)\big|^2|E(x)|^2
\]
for any $k\ge{\epsilon}$. In other words
\be\label{13:35}
|\{z>k\}|\le \frac{C}{\epsilon^2}\int_{\{0< z< {\epsilon}\}}\big|h(v)-h(w)\big|^2|E(x)|^2 \quad \mbox{for any }   {\epsilon}\le k.
\ee
If $v,w\in\elle{\infty}$, let us set $|v|+|w|\le M$ and use the assumption $h\in W^{1,\infty}_{loc}(\mathbb{R})$ to deduce that
\[
|\{z>k\}|\le C L_M^2 \int_{\{0< z< {\epsilon}\}}|E(x)|^2 \quad \mbox{for any }   {\epsilon}\le k,
\]
for a suitable $L_M$ independent on $\epsilon$. Since the measure of the set $\{0< z< {\epsilon}\}$ goes to zero as ${\epsilon}\to0$, we infer that
\be\label{17:50}
|\{z>k\}|=0 \quad  \mbox{for any }    k\ge0,
\ee
and we are done.\\
On the other hand, if $|h'(s)|\le c(|s|^{\theta}+1)$, we have that
\[
|h(v)-h(w)|\le c|v-w|\int_0^1\big(|w+t(v-w)|^{\theta}+1\big)dt\le \tilde c|v-w|(|v|^{\theta}+|w|^{\theta}+1 ),
\]
and \eqref{13:35} becomes
\[
|\{z>k\}|\le C\int_{\{0< z< {\epsilon}\}}(|v|^{2\theta}+|w|^{2\theta}+1 )|E(x)|^2 \quad \mbox{for any }   {\epsilon}\le k.
\]
Therefore, thanks to the assumptions and the absolute continuity of the integral, we can take again the limit as $\epsilon\to0$ and obtain again \eqref{17:50}.
\end{proof}

\subsection{Slightly superlinear case}
The first result that we prove concerns the bound of the sequence of approximating solutions $\{\un\}$ in $\w12 \cap L^{\infty}(\Omega)$. Let us recall that
\[
\varphi(t)=\int_0^{t}\frac{ds}{(|h(s)|+1)^2}, \quad H(t)=\int_0^t\frac{ds}{|h(s)|+1}.
\]
We notice that $\varphi(u_n)$ belongs to $\w12$, since the real valued function $\varphi(t)$ is Lipschitz. Moreover,  if   the assumption \eqref{ipoh} holds, there exists $C_1>0$ such that $|\varphi(s)| \leq C_1$ for any  $n \in \mathbb{N}$.

\begin{lemma} \label{decay}
Assume \eqref{alfa}, \eqref{ipoh}, and take $\mu\ge0$, $E \in [\elle 2]^N$, and $f \in \elle 1$. Then there exists $C>0$ such that
\begin{equation}
\label{stima_decay}
|\{|\un|>|k|\}|^{\frac{2}{2^*}}\le \frac{C}{|H(k)|^2}\io \big(|E|^2 + |f|\big), \quad  \forall \  n \in \mathbb{N},\, k\in \mathbb{R}
\end{equation}
\end{lemma}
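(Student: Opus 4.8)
The plan is to test the approximate equation \eqref{approx} with the function $\varphi(u_n)$, exactly as suggested in the introduction, and then exploit the inequality $|H'(t)|^2 = (|h(t)|+1)^{-2} = \varphi'(t)$ to convert the gradient term into a gradient of $H(u_n)$. Since $\varphi$ is globally Lipschitz and bounded (as recalled just before the statement), $\varphi(u_n)$ is an admissible test function in $\w12\cap\elle\infty$, and the zeroth order term $\mu\io u_n\varphi(u_n)\ge 0$ (because $u_n$ and $\varphi(u_n)$ have the same sign) can simply be discarded.

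First I would write out the three resulting terms. On the left, using \eqref{alfa},
\[
\io M(x)\nabla u_n\nabla\varphi(u_n)=\io M(x)\nabla u_n\cdot\nabla u_n\,\varphi'(u_n)\ge \alpha\io |\nabla u_n|^2\varphi'(u_n)=\alpha\io |\nabla H(u_n)|^2,
\]
since $\nabla H(u_n)=H'(u_n)\nabla u_n$ and $H'(u_n)^2=\varphi'(u_n)$. On the right, the drift term is
\[
\io h_n(u_n)E_n\cdot\nabla\varphi(u_n)=\io h_n(u_n)\varphi'(u_n)E_n\cdot\nabla u_n,
\]
and here the point is that $|h_n(s)\varphi'(s)|=|h_n(s)|/(|h(s)|+1)^2\le 1/(|h(s)|+1)=|H'(s)|$, so this term is bounded in absolute value by $\io |E_n||\nabla H(u_n)|\le \io |E||\nabla H(u_n)|$. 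The forcing term is handled by $|\io f_n\varphi(u_n)|\le \|\varphi\|_\infty\io |f|\le C_1\io|f|$ using $|f_n|\le |f|$ and the uniform bound on $\varphi$. Putting these together and using Young's inequality on $\io|E||\nabla H(u_n)|\le \tfrac{\alpha}{2}\io|\nabla H(u_n)|^2+\tfrac{1}{2\alpha}\io|E|^2$, I absorb half the gradient term on the left and obtain
\[
\frac{\alpha}{2}\io|\nabla H(u_n)|^2\le \frac{1}{2\alpha}\io|E|^2+C_1\io|f|\le C\io\big(|E|^2+|f|\big).
\]

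Then I would invoke the Sobolev inequality for $H(u_n)\in\w12$ (it lies in $\w12$ since $H$ is Lipschitz and $H(0)=0$): $\|H(u_n)\|_{L^{2^*}}^2\le \mathcal S^2\io|\nabla H(u_n)|^2\le C\io(|E|^2+|f|)$. Finally, on the superlevel set $\{|u_n|>|k|\}$ one has $|H(u_n)|\ge |H(k)|$ by monotonicity of $|H|$ (note $H$ is odd and increasing), hence
\[
|H(k)|^2\,|\{|u_n|>|k|\}|^{2/2^*}\le \io |H(u_n)|^{2^*}{}^{2/2^*}=\|H(u_n)\|_{L^{2^*}}^2\le C\io\big(|E|^2+|f|\big),
\]
which is \eqref{stima_decay} after dividing by $|H(k)|^2$ (the estimate being trivial when $H(k)=0$, i.e. $k=0$). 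The only mild subtlety — the "hard part", such as it is — is bookkeeping the signs and the truncations so that the $n$-dependent quantities $h_n$, $E_n$, $f_n$ are all dominated by their un-truncated counterparts uniformly in $n$, so that the final constant $C$ depends only on $\alpha$, $N$, and the fixed bound $C_1$ on $\|\varphi\|_\infty$; everything else is the routine test-function computation echoed in the introduction.
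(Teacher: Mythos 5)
Your proof is correct and follows essentially the same route as the paper's: test with the bounded primitive $\varphi(\un)$, rewrite the energy term as $\io|\nabla H(\un)|^2$ using $\varphi'=(H')^2$, and conclude via Young, Sobolev and Chebyshev (the paper merely phrases this through the truncations $G_{\varphi(k)}(\varphi(\un^{\pm}))$, which it needs later for the $L^\infty$ argument, and then sets $k=0$). The only caveat is your parenthetical claim that $H$ is odd: this holds only when $|h|$ is even, which \eqref{ipoh} does not guarantee, so on $\{|\un|>|k|\}$ one really obtains $|H(\un)|\ge\min\left(|H(|k|)|,|H(-|k|)|\right)$ — but this imprecision is already present in the lemma's statement and in the paper's own ``putting together'' step, so it is not a defect specific to your argument.
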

\begin{proof}
Consider the function $v=G_{\varphi(k)}(\varphi(\un^+))\ge 0$ (where $k\ge0$ and $\un^+$ is the positive part of $\un$) and notice that by construction
\be\label{setset}
\begin{split}
\left\{x \in \Omega \, :  v > 0 \right\}=&\left\{x \in \Omega \, : \varphi(\un) > \varphi(k)  \right\}=\left\{x \in \Omega \, :  \un >k \right\}\\
=&\left\{x \in \Omega \, :   H(\un) > H(k)  \right\}
\end{split}
\ee
Taking $v$ as a test function in \eqref{approx}, dropping the positive zeroth order term, and using the ellipticity of the matrix $M(x)$, we get
\[
\begin{split}
\alpha\int_{\{k<\un^+\}} \frac{|\nabla \un^+|^2}{(|h(\un^+)|+1)^2}\le & \int_{\{k<\un^+\}} |h(\un^+)||E| \frac{|\nabla \un^+|}{(|h(\un^+)|+1)^2}+\int_{\{k<\un^+\}} |f| \varphi(\un^+)\\
\le &\frac{1}{2\alpha}\int_{\{k<\un^+\}} |E(x)|^2+\frac\alpha 2\int_{\{k<\un^+\}} \frac{|\nabla \un^+|^2}{(|h(\un^+)|+1)^2}+C_1 \int_{\{k<\un^+\}}|f(x)|,
\end{split}
\] 
where we have used Young's inequality and that $|\varphi(\un^+)|\le C_1$. Therefore, recalling property \eqref{setset} and that $H'(s)=\frac{1}{(|h(s)|+1)^2}$, it follows that
\be\label{stampac}
\begin{split}
\frac{\alpha}{2\mathcal{S}^2}\left(\io | G_{H(k)}(H(\un^+))|^{2^*}\right)^{\frac2{2^*}}\le & \frac{\alpha}{2}\int_{\{k<\un^+\}} |\nabla H(\un^+)|^2\\ \le&  \frac{1}{2\alpha}\int_{\{k<\un^+\}} |E(x)|^2+ C_1 \int_{\{k<\un^+\}}|f(x)|.
\end{split}
\ee
Taking $k=0$, we deduce that for any $t>0$
\[
\begin{split}
|H(t)|^2|\{\un^+> t\}|^{\frac{2}{2^*}}\le& \left(\int_{\{\un^+\ge t\}} | H(\un^+)|^{2^*}\right)^{\frac2{2^*}}\\ \le&\left(\io | H(\un^+)|^{2^*}\right)^{\frac2{2^*}}\le C \io \left(|E(x)|^2+ |f(x)|\right),
\end{split}
\]
where we have used Chebyshev's inequality.
Repeating the same argument with $k\ge0$ and $v=G_{\varphi(k)}(\varphi(\un^-))\le 0$ (we use the notation $u=u^++u^-$), one gets
\be\label{stampacbis}
\frac{\alpha}{2\mathcal{S}^2}\left(\io | G_{H(k)}(H(\un^-))|^{2^*}\right)^{\frac2{2^*}}  \le  \frac{1}{2\alpha}\int_{\{\un^-<-k\}} |E(x)|^2+ C_1 \int_{\{\un^-<-k\}}|f(x)|,
\ee
and, for any $t>0$,
\[
|H(-t)|^2|\{\un^-< -t\}|^{\frac{2}{2^*}} \le C \io \left(|E(x)|^2+ |f(x)|\right).
\]
Putting together the estimates for $\un^+$ and $\un^-$ we obtain the desired result.
\end{proof}

%
%\begin{lemma}\label{decay}
%Assume \eqref{alfa}, \eqref{ipoh}, and take $E\in[\elle 2]^N$,  $f\in\elle 1$, $\mu\ge0$.  
%Then, there exists a positive constant $C$, independent on $n$,  such that 
%\begin{equation}
%\label{stima_decay}
%|\{|\un|>k\}|^{\frac{2}{2^*}}\le \frac{C}{[H(k)]^2}\left[\io |E|^2 +\io |f|\right], \quad  \forall n \in \mathbb{N}
%\end{equation}
%with $H(s)$ defined in \eqref{defH}
%\end{lemma}
%
%\begin{proof}
%
%Testing \eqref{approx} with $v= \varphi(\un)$  and dropping a positive term in the left hand-side, we obtain
%\[
%\alpha \io \frac{|\nabla \un|^2}{(|h(\un)|+1)^2}\le \io |E| |h(\un)| \frac{|\nabla \un|}{(h(\un)+1)^2}+C_1 \io |f|.
%\]
%Using Young and  Sobolev inequalities we obtain  
%\[
%\frac{\alpha}{2\mathcal{S}}\left(\io |H(\un)|^{2^*}\right)^{\frac{2}{2^*}}\le\frac{\alpha}{2} \io \frac{|\nabla \un |^2}{(|h(\un)|+1)^2}\le \frac{1}{2\alpha} \io |E|^2 +C_1 \io |f|,
%\]
%%where we  set 
%%\begin{equation}
%%\label{Hun}
%%H(\un)=\int_0^{\un}\frac{ds}{|h(s)|+1}.
%%\end{equation}
%At last, using Cebycev's inequality, we get
%\[
%|\{|\un|>k\}|^{\frac{2}{2^*}}\le \frac{1}{[H(k)]^2}\frac{2\mathcal{S}}{\alpha}\left[\frac{1}{2\alpha}\io |E|^2 +C_1 \io |f|\right].
%\]
%\end{proof}

Let us provide now the proof of existence of bounded solutions.
\begin{proof}[Proof of Theorem \ref{generalh}]
In order to prove the bound in $\elle{\infty}$ for the sequence $\{\un\}$, let us set $\eta=H(k)$ and $z_n=H(\un^+)$. Then, estimate \eqref{stampac} becomes
\[
\left(\io | G_{\eta}(z_n)|^{2^*}\right)^{\frac2{2^*}}\le C \int_{\eta\le|z_n|} \left(|E(x)|^2+ |f(x)|\right).
\]
Since $|E|^2+|f|$ belongs to $\elle m$ with $m>\frac{N}{2}$, it follows by classical results (see \cite{stamp}) that 
\[
\| z_n\|_{\elle{\infty}}=\|H(\un^+)\|_{\elle{\infty}}\le C,
\]
for some $C>0$ independent on $n$.
Taking advantage of assumption \eqref{growthg}, we deduce that 
\[
\|\un\|_{\elle{\infty}}\le H^{-1}(C).
\] Similarly, estimate \eqref{stampacbis} implies that $ \|\un^-\|_{\elle{\infty}}\le C'$, so that $\{\un\}$ is bounded in $\elle{\infty}$.\\ As a consequence, choosing $v=\un$ in \eqref{approx}, it easily follows that $\{\un\}$ is bounded also in $ \w12$.\\
Therefore, we can apply Lemma \ref{passtothelim} to conclude that there exists $u\in\w12\cap\elle{\infty}$ solution to \eqref{problem}. The uniqueness follows from Theorem \ref{comparison}.
\end{proof}

\begin{remark}
Notice that the estimate \eqref{stima_decay} is true for all nonlinearity $h(u)$. However, as we have just seen in the proof of Theorem \ref{generalh}, it is useful only if $H(k)\to\infty$ as $k$ diverges. Another consequence of \eqref{growthg} is that for any $\epsilon >0$ there exists $k_{\epsilon}$ such that $\forall \, k > k_{\epsilon}$ it follows that
\begin{equation}
\label{levelset}
|\{x \in \Omega : \quad |\un(x)|>k\}| \leq \epsilon , \quad \text{uniformly \, w.r. to } n \in \mathbb{N}.
\end{equation}
This estimate is crucial in order to prove the boundedness of $\{\un\}$ in the energy space, at least for some choices of the nonlinearity $h(u)$.
\end{remark}
Les us provide now our first result in the special case $h(s)=s\log(e+|s|)$.
\begin{proof}[Proof of Theorem \ref{logarithmich}]
The first step is to show that, for any $a\ge1$, there exists\\ $C=C(a, f, E)$ such that 
\begin{equation}
\label{stimaloga}
\|\log^a(e+|\un|)\|_{\elle{2^*}}\le C.
\end{equation}
Let us set
\[
v=\int_0^{\un}\frac{\log^{2(a-1)}(e+|s|)}{(e+|s|)^2}ds,
\] 
and notice that $|v|\le C_a$, for some positive constant $C_a$.
Taking $v$ as a test function in \eqref{approx}
and using Young's inequality, we get
\[
\begin{split}
\alpha\io|\nabla \un|^2\frac{\log^{2(a-1)}(e+|\un|)}{(e+|\un|)^2} \le& \io |E(x)||\nabla \un| \frac{\log^{2a-1}(e+|\un|)}{(e+|\un|)^2} +C_a\|f\|_{\elle 1}
\\ \le& \frac{1}{2\alpha}\io\log^{2a}(e+|\un|)|E(x)|^{2}+\frac{\alpha}{2}\io|\nabla \un|^2\frac{\log^{2(a-1)}(e+|\un|)}{(e+|\un|)^2}\\+&C_a\io|f|,\\
\end{split}
\]
that becomes
\[
\begin{split}
\frac{\alpha}{2a^2}\io|\nabla \log^a(e+|\un|)|^2&=\frac{\alpha}{2}\io|\nabla \un|^2\frac{\log^{2(a-1)}(e+|\un|)}{(e+|\un|)^2} 
\\ &\le \frac{1}{2\alpha}\io\log^{2a}(e+|\un|)|E(x)|^{2}+C_a\|f\|_{\elle 1}\\
&\le \frac{1}{\alpha}\int_{\{|\un|\ge k|\}}(\log^{a}(e+|\un|)-1)^2|E(x)|^{2}\\
&+\frac{\log^{2a}(e+k)+2}{2\alpha}\|E\|_{\elle 2}^2+C_a\|f\|_{\elle 1}.
\end{split}
\]
Applying Sobolev's embedding to the function $\log^a(e+|\un|)-1\in W^{1,2}_0(\Omega)$ and thanks to H\"older's inequality, it follows that
\[
\begin{split}
\mathcal{S}^{-2}\|(\log^a(e+|\un|)-1)\|_{\elle{2^*}}^{2}&\le \frac{a^2}{\alpha^2}\left(\int_{\{|\un|\ge k|\}}|E|^{N}\right)^{\frac2N}\|(\log^a(e+|\un|)-1)\|_{\elle{2^*}}^{2}\\
&+C_{k,a}(\|E\|_{\elle 2}^2+\|f\|_{\elle 1}).
\end{split}
\]
Thanks to estimate \eqref{levelset}, the fact that $E\in [\elle{N}]^N$, and the uniform continuity of the integral, there exists $k$ large enough such  that 
\[
\frac{a^2}{\alpha^2}\left(\int_{\{|\un|\ge k|\}}|E|^{N}\right)^{\frac2N}\le \frac1{2\mathcal{S}^2} \quad \forall \ n\in\mathbb{N}.
\]
Therefore, we conclude that 
\[
 \frac1{2\mathcal{S}^2}\|(\log^a(e+|\un|)-1)\|_{\elle {2^*} }^{2} \le C_{k,a} (\|E\|_{\elle 2}^2+\|f\|_{\elle 1}),
\]
and \eqref{stimaloga} follows.

Next, we will prove that $\left\{\un\right\}$ is bounded in $\w12$.
Taking $\un$ as a test function in \eqref{approx} and using Young's inequality, we get
\begin{equation}
\label{notte}
\begin{split}
\frac{\alpha}{2}\io|\nabla \un|^2\le& \frac{1}{2\alpha}\int_{\{|\un|> k|\}}|\un|^2\log^2(e+|\un|)|E|^2\\
+&\frac{k^2\log^2(e+k)}{2\alpha}\|E\|^2_{\elle2}+\|f\|_{\elle{2_{*}}}\|\un\|_{\elle{2^{*}}}.
\end{split}
\end{equation}
To deal with the first integral in the right hand side, notice that 
\be\label{12:29}
\begin{split}
\int_{\{|\un|> k|\}}|\un|^2\log^2(e+|\un|)|E|^2 \le &\left(\int_{\{|\un|> k|\}}|E|^{r}\right)^{\frac2r}\|\un\|_{\elle{2^{*}}}^2\|\log(e+|\un|)\|_{\elle b}^2\\
\le & C_{1} \left(\int_{\{|\un|> k|\}}|E|^{r}\right)^{\frac2r}\|\un\|_{\elle{2^{*}}}^2\\
\le & C_2 \left(\int_{\{|\un|> k|\}}|E|^{r}\right)^{\frac2r}\|\nabla \un\|_{\elle{2}}^2,
\end{split}
\ee
where $\frac1b=\frac1N-\frac1r$, we have used \eqref{stimaloga}, with $a=\max\{{b}/{2^*},1\}$, and Sobolev's inequality.
% Choosing $\Sigma=\{|\un|> k|\}$ we can  conclude that 
%\[
%\frac{1}{2\alpha} \int_{\{|\un|> k|\}}|\un|^2\log^2(e+|\un|)|E|^2 
%\leq C_1  \left(\int_{\{|\un|> k|\}}|E|^{r}\right)^{\frac2r}\|\nabla \un\|_{\elle{2}}^2
%\]
% with $C_1= C(\alpha, b, N, \|f\|_{\elle 1}, \|E\|_{[\elle N]^N})$. 
At this point, estimate \eqref{levelset} again assures us that there exists $k$ large enough so that
$$ C_2  \left(\int_{\{|\un|> k|\}}|E|^{r}\right)^{\frac2r} \leq  \frac{\alpha^2}{2}\quad \forall n\in\mathbb{N}.$$ 
Going back to estimate \eqref{notte}, we obtain
$$
 \frac{\alpha}{4} \|\nabla\un\|_{\elle{2}}^2  \leq
\frac{ k^2\log^2(e+k)}{2\alpha}\|E\|^2_{\elle2}+\|f\|_{\elle{2_{*}}}\|\un\|_{\elle{2^{*}}},
$$
which gives the boundedness of $\{\un\}$ in $\w12$. Since \eqref{12:29}. with $k=0$, implies
\[
\io |\un|^2\log^2(e+|\un|)|E|^2\le C_2 \left(\io|E|^{r}\right)^{\frac2r}\|\nabla \un\|_{\elle{2}}^2,
\] 
we can use Lemma \ref{passtothelim} to conclude that there exists a solution to \eqref{logproblem}.

As far as uniqueness is concerned, notice that in the case under consideration $h(s)=s\log(e+|s|)$ and, therefore,  $|h'(s)|\le 2\ +|s|$. Under our assumption on $E$, for any solution $u\in \w12$, we have that $|u||E|\in \elle 2$. Therefore, Theorem \ref{comparison} assures uniqueness. 

\end{proof}

\subsection{Strongly superlinear case} \label{fixpoint}
Let us start this section with the following auxiliary Lemma.
\begin{lemma}\label{trucco}
For $\delta,\theta>0$ set $R= (\delta(\theta+1))^{-1/\theta}$ and assume that
\[
K\le K_{\delta}=\left(\frac{1}{\delta(\theta+1)}\right)^{\frac{1}{\theta}}\frac{\theta}{\theta+1}.
\]
Then, if $s\in (0,R)$, it follows that
\[
\delta s^{1+\theta}+K\le R.
\]
\end{lemma}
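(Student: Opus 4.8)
The plan is entirely elementary, so I would not expect any genuine obstacle; the argument is a one–variable monotonicity observation followed by a short algebraic reduction. First I would note that, since $\delta>0$ and $\theta>0$, the map $s\mapsto \delta s^{1+\theta}+K$ is strictly increasing on $[0,R]$. Hence for every $s\in(0,R)$ we have $\delta s^{1+\theta}+K<\delta R^{1+\theta}+K$, and it suffices to prove the inequality at the right endpoint, i.e.
\[
\delta R^{1+\theta}+K\le R.
\]

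The next step is to use the specific value $R=(\delta(\theta+1))^{-1/\theta}$. Raising to the power $\theta$ gives $R^{\theta}=\dfrac{1}{\delta(\theta+1)}$, equivalently $\delta R^{\theta}=\dfrac{1}{\theta+1}$. Multiplying this identity by $R$ yields $\delta R^{1+\theta}=\dfrac{R}{\theta+1}$, so the endpoint inequality becomes
\[
\frac{R}{\theta+1}+K\le R,
\qquad\text{that is,}\qquad
K\le R\Bigl(1-\frac{1}{\theta+1}\Bigr)=\frac{\theta}{\theta+1}\,R.
\]

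Finally I would substitute the value of $R$ once more into the right–hand side: $\dfrac{\theta}{\theta+1}R=\dfrac{\theta}{\theta+1}\Bigl(\dfrac{1}{\delta(\theta+1)}\Bigr)^{1/\theta}=K_{\delta}$. Thus the reduced inequality is precisely the standing hypothesis $K\le K_{\delta}$, which closes the argument. The only point requiring a little care is the bookkeeping of the exponent $-1/\theta$ and the identity $\delta R^{\theta}=\tfrac{1}{\theta+1}$, where a reciprocal could be slipped; otherwise there is nothing to do. One may also read the computation as the statement that the fixed–point–type inequality $\delta s^{1+\theta}+K\le s$ is saturated at $s=R$ exactly when $K=K_{\delta}$, after which monotonicity propagates $\delta s^{1+\theta}+K\le R$ to all $s\in(0,R)$.
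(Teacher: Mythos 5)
Your argument is correct and is essentially the paper's own proof: both reduce to the endpoint $s=R$ by monotonicity and then verify, using $\delta R^{1+\theta}=\tfrac{R}{\theta+1}$ and $K_\delta=\tfrac{\theta}{\theta+1}R$, that $\delta R^{1+\theta}+K_\delta=R$. The only cosmetic difference is that the paper computes the sum $\delta R^{1+\theta}+K_\delta$ directly and shows it equals $R$, whereas you rearrange the endpoint inequality into the hypothesis $K\le K_\delta$; the algebra is identical.
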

\begin{proof}
For $s\in(0,R)$ and $K\le K_{\delta}$ we have that $\delta s^{1+\theta}+K\le \delta R^{1+\theta}+K_{\delta}$. Using the definition of $R$ and $K_{\delta}$ it follows that
\[
\begin{split}
\delta R^{1+\theta}+K_{\delta}=&\delta \left(\frac{1}{\delta(\theta+1)}\right)^{1+\frac1{\theta}}+\left(\frac{1}{\delta(\theta+1)}\right)^{\frac{1}{\theta}}\frac{\theta}{\theta+1}\\
=&\left(\frac{1}{\delta(\theta+1)}\right)^{\frac{1}{\theta}}\left(\frac{1}{\theta+1}+\frac{\theta}{\theta+1}\right)=R
\end{split}
\]
\end{proof}
Let us give the proof of the main result of this section.
\begin{proof}[Proof of Theorem \ref{fixedpoint}] Assume at first that $u_1,u_2\in \elle{m^{**}}\cap \w12$ are two solutions to \eqref{problembis}. Since, for $i=1,2$, we have that
\[
\io|u_i|^{2\theta}|E|^2\le \|u_i\|_{\elle{m^{**}}}^{2\theta}\|E\|_{\elle r}|\Omega|^{1-\frac2r-\frac{2\theta},{m^{**}}}<\infty,
\]
Theorem \ref{comparison} implies that $u_1$ and $u_2$ must coincide, and the uniqueness in the considered class of solutions is proved.\\
 Let us now address the proof of existence and let us divide it in two steps.\\
\textbf{Step 1.} Take $v\in \elle{m^{**}}$ and consider the following problem
\begin{equation}\label{phuc}
\begin{cases}
-\div(M(x)\nabla w)= -\div(v|v|^{\theta}E(x))+f(x) \qquad & \mbox{in } \Omega,\\
w (x) = 0 & \mbox{on }  \partial \Omega.
\end{cases}
\end{equation}
Since $m\ge 2_*$, the right hand side above belongs to $W^{-1,2}(\Omega)$ and the Lax-Milgram Theorem implies existence and uniqueness of a solution $w\in W^{1,2}_0(\Omega)$. The aim of this first step is to obtain precise estimates of the $\elle{m^{**}}$ norm of suitable truncations of $w$ in term of the norms of $v$ and $f$. For it, set $w_{k,l}=T_k(G_l(w))$ and take $\frac{1}{2\gamma-1}|w_{k,l}|^{2\gamma-2}w_{k,l}$, with $\gamma=\frac{m^{**}}{2^*}$, as a test function in the weak formulation of \eqref{phuc}. Recalling that $2\gamma-2>0$, we get
\be\label{30-10}
\alpha\io|\nabla w_{k,l}|^{2}|w_{k,l}|^{2\gamma-2}\le \io |E(x)||v|^{1+\theta}|\nabla w_{k,l}||w_{k,l}|^{2\gamma-2}+\frac{1}{2\gamma-1}\io|f(x)||w_{k,l}|^{2\gamma-1}.
\ee
In order to deal with the first integral in the right hand side of \eqref{30-10}, we recall that assumption \eqref{27-10bis} implies
\[
1=\frac1r+\frac{1+\theta}{m^{**}}+\frac12+\frac{\gamma-1}{m^{**}}.
\]
Using H\"older's inequality, we obtain that
\[
\begin{split}
\io |E(x)||v|^{1+\theta}|\nabla w_{k,l}||w_{k,l}|&^{2\gamma-2}  \\ \le C_{E,l}&\|v\|_{\elle{m^{**}}}^{1+\theta}\||\nabla w_{k,l}||w_{k,l}|^{\gamma-1}\|_{\elle 2}\|w_{k,l}\|_{\elle{m^{**}}}^{\gamma-1},
\end{split}
\]
where $C_{E,l}=\left(\int_{\{|w|> l\}}|E|^r\right)^{\frac1r}$. Moreover, recalling that $(2\gamma-1)m'=m^{**}=2^*\gamma$, it follows that
\[
\io|f||w_{k,l}|^{2\gamma-1}\le C_{f,l} \|w_{k,l}\|_{\elle{m^{**}}}^{\gamma-1} \left(\io|w_{k,l}|^{2^*\gamma}\right)^{\frac{1}{2^*}}
\]
\[
\le \mathcal{S}\gamma C_{f,l} \|w_{k,l}\|_{\elle{m^{**}}}^{\gamma-1} \left(\io|\nabla w_{k,l}|^2|w_{k,l}|^{2\gamma-2}\right)^{\frac12},
\]
where $C_{f,l}=\left(\int_{\{|w|\ge l\}}|f|^m\right)^{\frac1m}$ and we used Sobolev's inequality in the last line. Plugging these two piece of information in \eqref{30-10}, we get
\[
\frac{\alpha}{\mathcal{S}\gamma}	\left(\io|w_{k,l}|^{m^{**}}\right)^{\frac1{2^*}}\le \alpha\left(\io|\nabla w_{k,l}|^2|w_{k,l}|^{2\gamma-2}\right)^{\frac12}
\]
\[
\le \left[ C_{E,l}\|v\|_{\elle{m^{**}}}^{1+\theta}+\frac{ \mathcal{S}\gamma}{2\gamma-1}C_{f,l}\right]\left(\io|w_{k,l}|^{m^{**}}\right)^{\frac{\gamma-1}{m^{**}}}.
\]
That is
\be\label{18:13}
\|w_{k,l}\|_{\elle{m^{**}}}\le\frac{\mathcal{S}\gamma}{\alpha} \left[ C_{E,l}\|v\|_{\elle{m^{**}}}^{1+\theta}+\frac{ \mathcal{S}\gamma}{2\gamma-1}C_{f,l}\right].
\ee
Taking the limit as $k\to\infty$, $l=0$, and recalling that $\gamma\ge1$, we conclude that
\be\label{15:27}
\|w\|_{\elle{m^{**}}}\le \frac{\mathcal{S}m^{**}}{\alpha2^{*}}  \|E\|_{\elle{r}}\|v\|_{\elle{m^{**}}}^{1+\theta}+\frac{\mathcal{S}^2m^{**}}{\alpha2^{*}}\|f\|_{\elle m}.
\ee
\textbf{Step 2.} In this second step we shall use Schauder's fixed point theorem to prove existence of a solution to \eqref{problembis}. Using Lemma \ref{trucco} with 
\[
\delta=\frac{\mathcal{S}m^{**}}{\alpha2^{*}}   \|E\|_{\elle{r}} \ \ \ \mbox{and} \ \ \ K=\frac{ \mathcal{S}^2m^{**}}{\alpha2^*}\|f\|_{\elle m},
\]
recalling assumption \eqref{smallass}, and thanks to estimate \eqref{15:27}, we conclude that
\[
\|v\|_{\elle{m^{**}}}\le R \ \ \ \Longrightarrow \ \ \ \|w\|_{\elle{m^{**}}}\le \delta \|v\|_{\elle{m^{**}}} + K_{\delta} \le R,
\]
where $R= (\delta(\theta+1))^{-1/\theta}$.
 Therefore, considering the map $v\to S(v)=w$ that associates to any $v\in\elle{m^{**}}$ the unique solution $w\in \elle{m^{**}}$ of \eqref{phuc}, we deduce that $S(B_R)\subset B_R$, namely, the ball of radius $R$ is invariant under the action of $S$.\\
Take now a sequence $\{v_n\}\subset \elle{m^{**}}$ that strongly converges to $v$ in $\elle{m^{**}}$. If $w_n=S(v_n)$ and $w=S(v)$, the function $z_n=w_n-w$ solves
\[
\begin{cases}
-\div(M(x)\nabla z_n)= -\div\big((v_n|v_n|^{\theta}-v|v|^{\theta})E(x)\big) \qquad & \mbox{in } \Omega,\\
z_n (x) = 0 & \mbox{on }  \partial \Omega.
\end{cases}
\]
Following the same procedure used in Step 1 to obtain estimate \eqref{15:27}, we deduce that
\[
\|z_n\|_{\elle{m^{**}}}\le\frac{\mathcal{S}\gamma}{\alpha} \|E\|_{\elle{r}}\|v_n|v_n|^{\theta}-v|v|^{\theta}\|_{\elle{\frac{m^{**}}{1+\theta}}}.
\]
Thanks to the strong convergence of $v_n$ in $\elle{m^{**}}$, the dominated convergence theorem implies that the right hand side above goes to zero as $n\to\infty$. Therefore, the sequence $\{w_n\}$ converges to $w$ in $\elle{m^{**}}$, namely, the map $v\to S(v)$ is continuous.\\
To conclude, we need to show that the map is also compact, namely, if $v_n\rightharpoonup v$ in $\elle{m^{**}}$, up to a subsequence $w_n\to w$ in $\elle{m^{**}}$. Taking $w_n=S(v_n)$ as a test function in the weak formulation of the problem solved by $w_n$, we deduce that
\[
\begin{split}
\alpha\io |\nabla w_n|^2\le& \io |v_n|^{1+\theta}|E||\nabla w_n|+\io |f||w|\\
\le & C(\Omega) \|v_n\|^{\theta+1}_{\elle{m^{**}}}\|E\|_{\elle r}\|\nabla w_n\|_{\elle 2}+\mathcal{S}\|f\|_{\elle{2_*}}\|\nabla w\|_{\elle 2}
\end{split}
\]
where we have used that (recall assumption \eqref{27-10bis} and that $2\le m^*$)
\[
\frac{1+\theta}{m^{**}}+\frac1r+\frac12= \frac{1}{m^*}+\frac12\le 1.
\]
Therefore there exists $\zeta\in\w12$ such that, up to a subsequence, $\wn \to \zeta$ in $\elle 2$ and $a.e.$ in $\Omega$. However, the weak convergence of $v_n$ in $\elle{m^{**}}$ is enough to deduce that $\zeta$ coincides with $w=S(v)$.
Let us now use estimates \eqref{18:13} with $v_n$. Since $\|v_n\|_{\elle{m^{**}}}$ is bounded, we can take the limit as $k\to \infty$ to obtain that for any $l\ge 0$
\[
\|G_l(w_n)\|_{\elle{m^{**}}}\le C\left[\left(\int_{\{|w|> l\}}|E|^r\right)^{\frac1r}+ \left(\int_{|w_n|\ge l}|f|^m\right)^{\frac1m}\right].
\]
We stress that the right hand side above converges to zero as $l\to \infty$, uniformly with respect to $n$, due to the strong convergence of $\{w_n\}$ in $\elle 2$. Now, for any measurable set $E\subset\Omega$, we have that
\[
\int_{E}|w_n|^{m^{**}}\le \int_{E}|T_l(w_n)|^{m^{**}}+\io |G_l(w_n)|^{m^{**}}\le l^{m^{**}}|E|+\omega(l),
\]
with $\omega(l)\to 0$ as $l\to\infty$. Therefore, for any $\epsilon>0$, there exists $l$ large enough such that $\omega(l)\le \frac{\epsilon}2$. Setting $\delta= \frac{\epsilon}{2l^{m^{**}}}$ we have that
\[
\int_{E}|w_n|^{m^{**}}\le \epsilon \ \ \ \forall E \ : \ |E|<\delta.
\]
This means that the sequence is equi-integrable in $\elle{m^{**}}$ and Vitali Theorem assures us the required strong convergence.
\end{proof}

%
%As we shall see, \eqref{onedproblem} is related to the one-dimensional ode
%\begin{equation}\label{hartmann}
%-\bar u'(r)=\bar u^{1+\theta}\bar E(r)+r^{1-N}\int_0^r \bar f(s)s^{N-1}ds.
%\end{equation}
%However, we are not able to solve such an equation in its full generality. We shall consider some specific examples that will, nevertheless, be illustrative of the general case.

We prove now our result in the symmetric setting. 
\begin{proof}[Proof of Proposition \ref{radialcase}]
Let us start assuming that \eqref{onedproblem} has a solution $u\in \elle{\infty}\cap W^{1,2}_0(\Omega)$.  It follows that $u$ solves
\[
-\Delta u= K h'(u)\nabla u\frac{x}{|x|}+h(u)\frac{K(N-1)}{|x|}+\frac{\epsilon}{N-1}\frac1{|x|}
\]
in the weak sense and, therefore, we can apply Theorems 8.8 and 8.12 of \cite{book} (with $c=K(N-1) h'(u)x/|x|$ and $f=h(u){K}/{|x|}+{\epsilon}/{|x|(N-1)}$) to deduce that $u\in W^{2,2}(\Omega)$ and that the equation above is also satisfied a.e. in $\Omega$.
%, by Sobolev embedding, $|\nabla u|\in \elle{2^{*}}$. Therefore the right hand side in the formula above belongs to $\elle p$ for all $p\in(0,N)$ and the $L^p$ estimates for the Laplacian (see for instance Theorem 9.14 of Gilbarg-Trundinger) allow us to conclude that $u\in  W^{2,p}(\Omega)$ for any $p\in(1,N)$
Now, since Theorem \ref{comparison} implies uniqueness, $u$ has to be radial and the equation above becomes
\[
-\frac{1}{r^{N-1}}\frac{d}{dr}\left(r^{N-1}\frac{d}{dr}\bar u\right)= K  \frac{d}{dr} h(\bar u)+h(\bar u)\frac{K(N-1)}{r}+\frac{\epsilon}{N-1}\frac1{r}.
\]
Multiplying both side by $r^{N-1}$ and integrating between $0$ and $r$, 
%and recalling that $\lim_{r\to0}r^{N-1}\frac{d}{dr}\bar u=0$, 
we get 
%\begin{equation}\label{hartmann}
%-\bar u'(r)=\bar u^{1+\theta}\bar E(r)+r^{1-N}\int_0^r \bar f(s)s^{N-1}ds.
%\end{equation}
%Exploiting the assumptions that we made on $\bar E$ and $\bar f$ we reduce to
\[
-\bar u'(r)=K h(\bar u)+{\epsilon}.
\]
By separation of variables, it follows
\[
\int_{\bar u(R)=0}^{\bar u(0)}\frac{ds}{Kh(s)+{\epsilon}}=R
\]
that clearly implies \eqref{oss}.\\

On the other hand, assume that \eqref{oss} holds true and let $a>0$ such that 
\[
\int_{0}^{a}\frac{ds}{Kh(s)+{\epsilon}}=R
\]
If we set $\bar u$ to be the inverse function of $t\to \int_{t}^{a}\frac{ds}{h(s)+{\epsilon}}$, it follows, doing the previous computation backwards, that $u(x)=\bar u(|x|)$ is the unique solution to \eqref{onedproblem}.
\end{proof}
\begin{remark}[Reaction non coercive term]\label{explicit1}
If we take $h(s)=s^2$, the ODE
\[
-u'(r)=-K u^{2}+{\epsilon},  \quad \mbox{and} \quad \ u(R)=0
\]
can be solved by separation of variable 
\[
\int_{a}^{u(r)}\frac{ds}{Ks^{2}+{\epsilon}}=-r,
\]
that explicitly provides
\[
\frac{1}{\sqrt{K{\epsilon}}}\left[\arctan{\left(u(r)\sqrt{\frac{K}{{\epsilon}}}\right)}-\arctan{\left(a\sqrt{\frac{K}{{\epsilon}}}\right)}\right]=-r.
\]
Imposing that $u(R)=0$ we find that
\[
a=\sqrt{\frac{{\epsilon}}{K}}\tan{\left(\sqrt{K{\epsilon}}R\right)}.
\]
Since our solution has to be bounded and positive, we deduce that $\sqrt{K{\epsilon}}R<\frac{\pi}{2}$. Eventually we get
\[
u(r)=\sqrt{\frac{{\epsilon}}{K}}\tan{\left(\sqrt{K{\epsilon}}(R-r)\right)}.
\]
\end{remark}
\begin{remark}[Absorption coercive term]\label{explicit2}
It is also interesting to consider the following equation
\[
-u'(r)=-K u^{2}+{\epsilon},  \quad \mbox{and} \quad \ u(R)=0,
\]
namely assuming \emph{the good sign condition} for the divergence of $E$. In this case one expects that the nonlinear term should not affect existence of solution. Indeed one gets
\[
\int_{a}^{u(r)}\frac{ds}{Ku^{2}-{\epsilon}}=r,
\]
that is
\[
-\frac{1}{\sqrt{K{\epsilon}}}\left[\arctanh{\left(u(r)\sqrt{\frac{K}{{\epsilon}}}\right)}-\arctanh{\left(a\sqrt{\frac{K}{{\epsilon}}}\right)}\right]=r.
\]
We recall that $\arctanh$ is a function defined in $(-1,1)$ strictly increasing, with $\arctanh(0)=0$, and with image $(-\infty,+\infty)$. Then for any values of  $K,{\epsilon}, R$ we find  
\[
0<a=\sqrt{\frac{{\epsilon}}{K}}\tanh\left(\sqrt{K{\epsilon}}R\right).
\]
The solution is
\[
u(r)=\sqrt{\frac{K}{{\epsilon}}}\tanh{\left(\sqrt{K{\epsilon}}(R-r)\right)}.
\]
\end{remark}

\subsection{The impact of the zero order term}
In this section we deal with problem \eqref{problem} with $\mu>0$. We take advantage of an $L^1$ a priori estimate due to the presence of the zeroth lower order term.

\begin{lemma}\label{elle1estimate}
Assume \eqref{alfa} and the first two conditions in \eqref{ipoh}. Take $E\in[\elle 2]^N$,  $f\in\elle 1$, $\mu >0$, and let $\un$ be the weak solution of the problem \eqref{problem}. Therefore, we have
\begin{equation}
\label{52}
\io |\un|\le \frac{1}{\mu}\io |f|.
\end{equation}
\end{lemma}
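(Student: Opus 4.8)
The natural idea is to test the weak formulation \eqref{approx} (with $\mu>0$ and with the truncated data, since $\un$ is by construction the solution of the approximating problem, but in fact the statement refers to the genuine weak solution, so I will test the weak formulation of \eqref{problem} directly) against a bounded approximation of $\mathrm{sign}(\un)$. Concretely, I would take $\varphi = \frac{1}{\delta}T_\delta(\un)$ as test function and then let $\delta\to 0$. With this choice the zeroth order term produces $\frac{\mu}{\delta}\io \un\, T_\delta(\un) = \frac{\mu}{\delta}\io |\un|\,|T_\delta(\un)|$, which converges to $\mu\io|\un|$ by monotone convergence as $\delta\to 0$; the right-hand forcing term gives $\frac1\delta\io f\,T_\delta(\un)\le \io |f|$ since $|T_\delta(\un)/\delta|\le 1$ pointwise.

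The two terms that must be controlled are the diffusion term $\frac1\delta\io M(x)\nabla\un\cdot\nabla T_\delta(\un)$ and the convection term $\frac1\delta\io h(\un)E(x)\cdot\nabla T_\delta(\un)$. For the diffusion term, since $\nabla T_\delta(\un) = \nabla\un\,\chi_{\{|\un|<\delta\}}$, ellipticity \eqref{alfa} gives $\frac1\delta\io M(x)\nabla\un\cdot\nabla T_\delta(\un)\ge \frac{\alpha}{\delta}\int_{\{|\un|<\delta\}}|\nabla\un|^2\ge 0$, so this term can simply be dropped (moved to the favourable side). For the convection term, I would estimate
\[
\left|\frac1\delta\io h(\un)E(x)\cdot\nabla T_\delta(\un)\right|
= \left|\frac1\delta\int_{\{|\un|<\delta\}} h(\un)\,E(x)\cdot\nabla\un\right|
\le \frac{1}{\delta}\,\sup_{|s|\le\delta}|h(s)|\,\int_{\{|\un|<\delta\}}|E|\,|\nabla\un|.
\]
Using $h(0)=0$ and $h\in W^{1,\infty}_{loc}$ we have $\sup_{|s|\le\delta}|h(s)|\le L\delta$ for small $\delta$, which kills the factor $1/\delta$; then Cauchy--Schwarz bounds the remaining integral by $\|E\|_{\elle 2}\big(\int_{\{|\un|<\delta\}}|\nabla\un|^2\big)^{1/2}$, and the last factor tends to $0$ as $\delta\to 0$ because $\nabla\un\in\elle 2$ and $|\{|\un|<\delta\}|$ is bounded while $\int_{\{0<|\un|<\delta\}}|\nabla \un|^2\to \int_{\{\un=0\}}|\nabla\un|^2 = 0$ (on $\{\un=0\}$ the gradient vanishes a.e.). Hence the convection contribution vanishes in the limit, and one is left with $\mu\io|\un|\le\io|f|$, which is \eqref{52}.

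The main obstacle is the careful handling of the convection term: one must genuinely exploit both $h(0)=0$ (to extract the compensating factor $\delta$) and the vanishing of $\int_{\{0<|\un|<\delta\}}|\nabla\un|^2$ as $\delta\to 0$; without the first, the $1/\delta$ is uncontrolled, and the argument is exactly what makes the bound \eqref{52} independent of $\alpha$, since the diffusion term is only used with its sign and never with a lower bound quantitatively tied to $\alpha$. A minor point to be stated cleanly is the passage to the limit in $\frac\mu\delta\io\un T_\delta(\un)$, which is justified by monotone convergence since the integrands increase to $|\un|$ pointwise as $\delta\downarrow 0$.
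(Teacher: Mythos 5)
Your proposal is correct and follows essentially the same route as the paper: the paper tests \eqref{approx} with $T_{\epsilon}(\un)$, divides by $\epsilon$, and lets $\epsilon\to 0$, exploiting exactly as you do that $h(0)=0$ and $h\in W^{1,\infty}_{loc}$ give $\sup_{|s|\le\epsilon}|h(s)|\le L\epsilon$ to cancel the $1/\epsilon$. The only cosmetic difference is in disposing of the convection term (the paper uses Young's inequality to absorb it into the diffusion term, while you use Cauchy--Schwarz together with $\int_{\{0<|\un|<\delta\}}|\nabla \un|^2\to 0$); both work and both yield a bound independent of $\alpha$.
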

\begin{proof}
Let us take $T_{\epsilon}(u_n)$ as a test function in \eqref{approx} to obtain
\[
 \alpha\io |\nabla T_{\epsilon}(u_n)|^2+\mu\io u_nT_{\epsilon}u_n)\le h(\epsilon)\io|E(x)||\nabla T_{\epsilon}(u_n) |+ \epsilon \|f\|_{\elle 1}.
\]
Dividing everything by $\epsilon$, using Young's inequality, and recalling that $h(0)=0$ and that $h\in W^{1,\infty}(\mathbb{R})$, we get
\[
\begin{split}
\frac{\mu}{\epsilon}\io u_nT_{\epsilon}(u_n)\le \frac{\alpha}{2\epsilon}\io|\nabla T_{\epsilon}(u_n)|^2+\mu\io u_nT_{\epsilon}(u_n)\le L\frac{\epsilon}{2\alpha}\io|E(x)|^2+\|f\|_{\elle 1},
\end{split}
\]
where $L$ is the Lipschitz constant of $h(s)$ for $s\in(-1,1)$. Taking the limit as $\epsilon \to 0$, we finally obtain \eqref{52}.
\end{proof}

We can now prove Theorem \ref{+uteo}

\begin{proof}[Proof of Theorem \ref{+uteo}]
Thanks to estimate \eqref{52}, we can fix $k>0$, independent on $n$, such that
\[
2^{1+\theta}\mathcal{S}\mu^{-1}\|f\|_{\elle 1}^{\theta}   \left(\int_{|u_n|\ge k} |E(x)|^r\right)^{\frac1r}\le \frac{\alpha}{4}.
\]
Taking $G_k(u_n)$ as a test function in \eqref{approx}, we get
\[
\begin{split}
\alpha\io|\nabla G_k(u_n)|^2\le& 2^{1+\theta}\io |G_k(u_n)|^{1+\theta} |E(x)||\nabla G_k(u_n)|\\
+& (2k)^{1+\theta}\io|E(x)||\nabla G_k(u_n)|+\io |f||G_{{k}}(u_n)|,
\end{split}
\]
where we have used that $u_n=T_k(u_n)+G_k(u_n)$ and that $|a+b|^{1+\theta}\le 2^{1+\theta} ( |a|^{1+\theta}+|b|^{1+\theta})$.
To estimate the first term in the right hand side above, we take advantage of the fact that the integral is carried over super level sets of the functions $u_n$. More specifically,  by assumption on $\theta$,  using the H\"older's inequality  and Lemma \eqref{elle1estimate} we have that
\[
\begin{split}
 2^{1+\theta} \io |G_k(u_n)|^{1+\theta} |E(x)||\nabla G_k(u_n)|=2^{1+\theta} \int_{|u_n|\ge k} |G_k(u_n)| |G_k(u_n)|^{\theta}|E(x)||\nabla G_k(u_n)|&\\ \le  2^{1+\theta}\|G_k(u_n)\|_{\elle{2^*}}\|G_k(u_n)\|_{\elle 1}^{\theta}\left(\int_{|u_n|\ge k} |E(x)|^r\right)^{\frac1r}\|\nabla G_k(u_n)\|_{\elle 2}&\\
\le 2^{1+\theta}\mathcal{S}\mu^{-1}\|f\|_{\elle 1}^{\theta}   \left(\int_{|u_n|\ge k} |E(x)|^r\right)^{\frac1r}   \|\nabla G_k(u_n)\|_{\elle 2}^2&.
\end{split}
\]
The choice of $ k$ implies that
\[
2^{1+\theta} \io |G_{{k}}(u_n)|^{1+\theta} |E(x)||\nabla G_{{k}}(u_n)|\le \frac{\alpha}{4}\io|\nabla G_{{k}}(u_n)|^2 
\]
On the other hand a simple application of Young's inequality allows us to write
\[
 (2k)^{1+\theta}\io|E(x)||\nabla G_{{k}}(u_n)|+\io |f||G_{{k}}(u_n)|\le C\left(k^{2(1+\theta)}\|E\|_{\elle2}^2+\|f\|_{\elle{2_*}}\right)+\frac{\alpha}{4}\io|\nabla  G_{{k}}(u_n)|^2 .
\]
Therefor we deduce that
\[
\frac{\alpha}{2}\io|\nabla G_{{k}}(u_n)|^2\le C\left(k^{2(1+\theta)}\|E\|_{\elle2}^2+\|f\|_{\elle{2_*}}^2\right).
\]
To conclude the argument, let us take $T_{{k}}(u_n)$ as a test function in \eqref{approx}. It follows that
\[
\frac{\alpha}{2}\io|\nabla T_{{k}}(u_n)|^2\le \frac{k^{2(1+\theta)}}{\alpha 2}\|E\|_{\elle2}^2+{k}\|f\|_{\elle1}.
\]
Putting together the estimate for $\nabla G_k(u_n)$ and $\nabla T_k(u_n)$, we recover that $\|\un\|_{\w12}\le C'$. Moreover, recalling that $r=N/(1-\theta N)$, we have that
\[
\int_{\Omega} |\un|^{2+2\theta}|E|^{2}\le \|\un\|_{\elle{2^*}}^{2}\|u\|_{\elle 1}^{2\theta}\|E\|_{L^{r}(\Sigma)}^{2}.
\]
At last, we can apply Lemma \ref{passtothelim} and Theorem \ref{comparison} to conclude that there exists a unique solution to \eqref{zeroorderterm}.
\end{proof}
\begin{remark}\label{lastrem} Let us show that if $\theta>\frac{1}{N}$ there is no hope to have existence of solution to \eqref{zeroorderterm} for general $f,E$. Assume that $\Omega$ is the ball centered in the origin with radius one, $E=-\frac{x}{|x|}$, $f\ge0$, and that there exists a solution $u\in\w12$ to \eqref{zeroorderterm}. Taking $v=1-|x|^2$ as a test function, we get
\[
\io fv+2 \io u^{\theta+1}|x|= \io u(v-\Delta v)\le C \io u \le \epsilon \io u^{\theta+1}|x|+C(\epsilon)\io|x|^{-\frac1{\theta}}.
\]
Choosing, for instance, $\epsilon=1$ and dropping the terms involving $u^{\theta+1}$, we end up with
\[
\io fv\le C \int_0^1 r^{-\frac1{\theta}+N-1},
\]
with the right hand side being a finite constant thanks to the assumption $\theta>\frac1N$. This is a necessary condition on $f$ in order to have existence of solutions.
\end{remark}

We can now give the proof of our last result.
\begin{proof}[Proof of Theorem \ref{logsummE}]

Taking $\un$ as a test function and using Young's inequality, we get
\be\label{14:33} 
\frac\alpha2\io|\dun|^2
\leq\frac1{2\alpha}\io|\un|^2\,\log^2(e+|\un|) \,|E|
+\io|f||\un|
\ee
To deal with the third integral in the formula above, we split it in two terms as follows
\[
\begin{split}
\disp 
\io&|\un|^2\,\log^2(e+|\un|) \,|E|^2
\\
\disp
\le&
\rho^2\ipr|\un|^2\,\log^2(e+|\un|)
+\int_{|E|>\rho}|\un|^2\,\log^2(e+|\un|) \,|E|^2
\\
\disp
=&I_1 + I_2
\end{split}
\]
Taking $\lambda\in (0,\frac{2^*-2}{2^*-1})$, we get
\[
\begin{split}
I_1\le& \rho^{2}\io|\un|^{2-\lambda}\,|\un|^{\lambda}\log^2(e+|\un|)\\
\le & \rho^{2} \|\un\|_{\elle{2^*}}^{2-\lambda}\left(\io |\un|^{\frac{2^*\lambda}{2^*-2+\lambda}}\log^{\frac{2^*2}{2^*-2+\lambda}}(e+|\un|) \right)^{1-\frac{2-\lambda}{2^*}}\\
\le& \rho^{2} C_1 \|\un\|_{\elle{2^*}}^{2-\lambda}(|\Omega|+\|\un\|_{\elle 1})^{1-\frac{2-\lambda}{2^*}}\le \rho^2 C_2\|\nabla \un\|_{\elle{2}}^{2-\lambda},
\end{split}
\]
where we have used that $\frac{2^*\lambda}{2^*-2+\lambda}<1$ and that $s^{\frac{2^*\lambda}{2^*-2+\lambda}}\log^{\frac{2^*2}{2^*-2+\lambda}}(e+s)\le C(1+s)$ for $s>0$ and some constant $C$ (that depends on $\lambda$).\\
To estimate $I_2$, we take advantage of the inequality
\[
s\,t\leq A s\,\log(e+s)+ A e^{t/A} \quad \forall s,t\ge0,
\]
that can easily proven showing that the the function $f(t)=\lambda s\,\log(1+s)+ \lambda e^{t/\lambda}-st$ is nonnegative.
Setting $s=|E|$ and $\;t=\log(e+|\un|)$ with $A>N$, we deduce that
\be\label{10:28}
 |E|\, \log(e+|u_n|)  \leq  A\,|E| \,\log(e+|E|)+ A (e+|u_n|)^\frac1A.
\ee
We get
\[
\begin{split}
I_2\le& 2A^2\int_{|E|>\rho} |\un|^2 \;|E|^2\, \log^2(1+|E|) 
+2A^2\int_{|E|>\rho}|\un|^2 \,(e+|\un|)^\frac2A\\
\le& 2A^2 \left[\left(\int_{|E|>\rho}|E|^N\log^N(e+|E|) \right)^{\frac2N}
+2A^2 \|1+|\un|\|_{\elle 1}^{2/A}|\{|E|>\rho\}|^{\frac2N-\frac2A}\right]\|\un\|_{\elle{2^*}}^2
\end{split}
\]
Notice that the term in the square brackets goes to zero as $\rho\to\infty$. Therefore, the contribution of $I_2$ in estimate \eqref{14:33} can be absorbed into the left hand side and we obtain that
\[
 \|\nabla\un\|_{\elle{2}}^2\le  C_3 \rho^2 \|\nabla\un\|_{\elle{2}}^{2-\lambda} +\|f\|_{\elle{(2^*)'}}.
\]
This implies that $\{\un\}$ is a bounded subset of $\w12$.\\
We have to show now that the sequence $\{|\un|^2\,\log^2(e+|\un|) \,|E|^2\}$ bounded. To do it we use again \eqref{10:28}, with $A>N$, to get
\[
\begin{split}
\int_{\Omega}|\un|^2\,\log^2(e+|\un|) \,|E|^2\le 2A^2\int_{\Omega} |\un|^2 \;|E|^2\, \log^2(e+A\,|E|) 
+2\int_{\Omega} |\un|^2 \,(e+|\un|)^\frac2A\\
\le 2A^2 \|\un\|_{\elle{2^*}}\left(\int_{\Omega} |E|^N\log^N(e+A\,|E|) \right)^{\frac2N}+2 \|\un\|_{\elle{2^*}}^2\|1+|\un|\|_{\elle 1}^{\frac2A}|\Omega|^{\frac2N-\frac2A}.
\end{split}
\]
At this point we can use Lemma \ref{passtothelim} to conclude that $\un\rightharpoonup u$ in $\w12$ and that $u$ solves \eqref{problemalog}. As far as uniqueness is concerned notice that if $h(s)=s\log(e+|s|)$ then $|h'(s)|\le c+|s|$ and clearly for any for any solution $u\in \w12$ we have that $|u||E|\in \elle 2$. Therefore Theorem \ref{comparison} assures uniqueness. 
\end{proof}

 \section*{Acknowledgement} 
\noindent S.B. is supported by the Austrian Science Fund (FWF) projects F65, P32788 and FW506004, by the Belgian National Fund for Scientific Research (NFSR) project 40006150, and by the GNAMPA-INdAM Project 2023 ``Regolarità per problemi ellittici e parabolici con crescite non standard" (CUP\_E53C22001930001).

\begin{bibdiv}

\begin{biblist}

\bib{tom}{article}{
   author={Abdellaoui, B.},
   author={Fern\'{a}ndez, A. J.},
   author={Leonori, T.},
   author={Younes, A.},
   title={Deterministic KPZ-type equations with nonlocal ``gradient terms''},
   journal={Ann. Mat. Pura Appl. (4)},
   volume={202},
   date={2023},
   number={3},
   pages={1451--1468},
}

\bib{porrettacon}{article}{
   AUTHOR = {Achdou, Y.}
 AUTHOR = {Porretta, A.}
     TITLE = {Mean field games with congestion},
   JOURNAL = {Ann. Inst. H. Poincar\'{e} C Anal. Non Lin\'{e}aire},
    VOLUME = {35},
      YEAR = {2018},
    NUMBER = {2},
     PAGES = {443--480},
}

\bib{david}{article}{
   author={Arcoya, D.},
   author={De Coster, C.},
   author={Jeanjean, L.},
   author={Tanaka, K.},
   title={Continuum of solutions for an elliptic problem with critical
   growth in the gradient},
   journal={J. Funct. Anal.},
   volume={268},
   date={2015},
   number={8},
   pages={2298--2335},
}

\bib{betta}{article}{
   author={Betta, M. F.},
   author={Ferone, V.},
   author={Mercaldo, A.},
   title={Regularity for solutions of nonlinear elliptic equations},
   language={English, with English and French summaries},
   journal={Bull. Sci. Math.},
   volume={118},
   date={1994},
   number={6},
   pages={539--567}
}

\bib{betta1}{article}{
   author={Betta, M. F.},
   author={Di Nardo, R.},
   author={Mercaldo, A.},
   author={Perrotta, A.},
   title={Gradient estimates and comparison principle for some nonlinear
   elliptic equations},
   journal={Commun. Pure Appl. Anal.},
   volume={14},
   date={2015},
   number={3},
   pages={897--922},
}

\bib{Bumi2009}{article}{
AUTHOR = {Boccardo, L.},
     TITLE = {Some developments on {D}irichlet problems with discontinuous
              coefficients},
   JOURNAL = {Boll. Unione Mat. Ital. (9)},
  %FJOURNAL = {Bollettino della Unione Matematica Italiana. Serie 9},
    VOLUME = {2},
      YEAR = {2009},
    NUMBER = {1},
     PAGES = {285--297},
}

\bib{Bumi2012}{article}{
   AUTHOR = {Boccardo, L.},
     TITLE = {Finite energy solutions of nonlinear {D}irichlet problems with
              discontinuous coefficients},
   JOURNAL = {Boll. Unione Mat. Ital. (9)},
    VOLUME = {5},
      YEAR = {2012},
    NUMBER = {2},
     PAGES = {357--368}
}

\bib{jde}{article}{
   author={Boccardo, L.},
   title={Dirichlet problems with singular convection terms and
   applications},
   journal={J. Differential Equations},
   volume={258},
   date={2015},
   number={7},
   pages={2290--2314}
}

\bib{BB}{article}{
 AUTHOR = {Boccardo, L.},
AUTHOR = {Buccheri, S.},
     TITLE = {A nonlinear homotopy between two linear {D}irichlet problems},
   JOURNAL = {Rev. Mat. Complut.},
    VOLUME = {34},
      YEAR = {2021},
    NUMBER = {2},
     PAGES = {541--558},
}

\bib{bbc}{article}{
   author={Boccardo, L.},
   author={Buccheri, S.},
   author={Cirmi, G. R.},
   title={Two linear noncoercive Dirichlet problems in duality},
   journal={Milan J. Math.},
   volume={86},
   date={2018},
   number={1},
   pages={97--104}
}

\bib{bop}{article}{
   author={Boccardo, L.},
   author={Orsina, L.},
   author={Porretta, A.},
   title={Some noncoercive parabolic equations with lower order terms in
   divergence form},
   note={Dedicated to Philippe B\'{e}nilan},
   journal={J. Evol. Equ.},
   volume={3},
   date={2003},
   number={3},
   pages={407--418},
}

\bib{bopbis}{article}{
   author={Boccardo, L.},
   author={Orsina, L.},
   author={Porzio, M. M.},
   title={Regularity results and asymptotic behavior for a noncoercive
   parabolic problem},
   journal={J. Evol. Equ.},
   volume={21},
   date={2021},
   number={2},
   pages={2195--2211},
}

\bib{bmp84}{article}{
   author={Boccardo, L.},
   author={Murat, F.},
   author={Puel, J.-P.},
   title={R\'{e}sultats d'existence pour certains probl\`emes elliptiques
   quasilin\'{e}aires},
   journal={Ann. Scuola Norm. Sup. Pisa Cl. Sci. (4)},
   volume={11},
   date={1984},
   number={2},
   pages={213--235}
}

\bib{bmp}{article}{
   author={Boccardo, L.},
   author={Murat, F.},
   author={Puel, J.-P.},
   title={$L^\infty$ estimate for some nonlinear elliptic partial
   differential equations and application to an existence result},
   journal={SIAM J. Math. Anal.},
   volume={23},
   date={1992},
   number={2},
   pages={326--333}
}

\bib{bucc}{article}{
   author={Buccheri, S.},
   title={Gradient estimates for nonlinear elliptic equations with first
   order terms},
   journal={Manuscripta Math.},
   volume={165},
   date={2021},
   number={1-2},
   pages={191--225}
}

\bib{salva}{article}{
   author={Carmona, J.},
   author={L\'{o}pez-Mart\'{\i}nez, S.},
   author={Mart\'{\i}nez-Aparicio, P. J.},
   title={A priori estimates for non-coercive Dirichlet problems with
   subquadratic gradient terms},
   journal={J. Differential Equations},
   volume={366},
   date={2023},
   pages={292--319},
}

\bib{cirmi2020}{article}{
   author={Cirmi, G. R.},
   author={D'Asero, S.},
   author={Leonardi, S.},
   title={Morrey estimates for a class of elliptic equations with drift
   term},
   journal={Adv. Nonlinear Anal.},
   volume={9},
   date={2020},
   number={1},
   pages={1333--1350},
}

\bib{cirmi2022}{article}{
   author={Cirmi, G. R.},
   author={D'Asero, S.},
   author={Leonardi, S.},
   author={Porzio, M. M.},
   title={Local regularity results for solutions of linear elliptic
   equations with drift term},
   journal={Adv. Calc. Var.},
   volume={15},
   date={2022},
   number={1},
   pages={19--32},
}

\bib{cornalba}{article}{
   author={Cornalba, F.},
   author={Shardlow, T.},
   author={Zimmer, J.},
   title={A regularized Dean-Kawasaki model: derivation and analysis},
   journal={SIAM J. Math. Anal.},
   volume={51},
   date={2019},
   number={2},
   pages={1137--1187}
}

\bib{antonio}{article}{
   author={De Coster, C.},
   author={Fern\'{a}ndez, A. J.},
   title={Existence and multiplicity for an elliptic problem with critical
   growth in the gradient and sign-changing coefficients},
   journal={Calc. Var. Partial Differential Equations},
   volume={59},
   date={2020},
   number={3},
   pages={Paper No. 97, 34},
}

\bib{degond}{article}{
   author={Degond, P.},
   author={Goudon, T.},
   author={Poupaud, F.},
   title={Diffusion limit for nonhomogeneous and non-micro-reversible
   processes},
   journal={Indiana Univ. Math. J.},
   volume={49},
   date={2000},
   number={3},
   pages={1175--1198},
}

\bib{delvecchio}{article}{
 AUTHOR = {Del Vecchio, T.},
 AUTHOR = {Posteraro, M. R.},
     TITLE = {An existence result for nonlinear and noncoercive problems},
   JOURNAL = {Nonlinear Anal.},
    VOLUME = {31},
      YEAR = {1998},
    NUMBER = {1-2},
     PAGES = {191--206}
}

\bib{farroni2021}{article}{
   author={Farroni, F.},
   author={Greco, L.},
   author={Moscariello, G.},
   author={Zecca, G.},
   title={Noncoercive quasilinear elliptic operators with singular lower
   order terms},
   journal={Calc. Var. Partial Differential Equations},
   volume={60},
   date={2021},
   number={3},
   pages={Paper No. 83, 20}
}

\bib{farronibis}{article}{
   author={Farroni, F.},
   author={Greco, L.},
   author={Moscariello, G.},
   author={Zecca, G.},
   title={Nonlinear evolution problems with singular coefficients in the
   lower order terms},
   journal={NoDEA Nonlinear Differential Equations Appl.},
   volume={28},
   date={2021},
   number={4},
   pages={Paper No. 38, 25},
}

\bib{feng}{article}{
   author={Feng, Z.},
   author={Zhang, A.},
   author={Gao, H.},
   title={Local boundedness under nonstandard growth conditions},
   journal={J. Math. Anal. Appl.},
   volume={526},
   date={2023},
   number={2},
   pages={Paper No. 127280, 19},
}

\bib{book}{book}{
   author={Gilbarg, D.},
   author={Trudinger, N. S.},
   title={Elliptic partial differential equations of second order},
   series={Classics in Mathematics},
   note={Reprint of the 1998 edition},
   publisher={Springer-Verlag, Berlin},
   date={2001},
}

\bib{goffi}{article}{
 AUTHOR = {Goffi, A.},
     TITLE = {On the optimal $L^q$-regularity for viscous Hamilton–Jacobi equations with subquadratic growth in the gradient},
   JOURNAL = {	Commun. Contemp. Math},
      YEAR = {2023},
}

\bib{gomezcastro}{article}{
   author={G\'omez Castro, D.},
   title={Beginner's guide to Aggregation-Diffusion Equations},
   journal={preprint https://arxiv.org/pdf/2309.13713.pdf}
}

\bib{grenon}{article}{
   author={Grenon, N.},
   author={Murat, F.},
   author={Porretta, A.},
   title={A priori estimates and existence for elliptic equations with
   gradient dependent terms},
   journal={Ann. Sc. Norm. Super. Pisa Cl. Sci. (5)},
   volume={13},
   date={2014},
   number={1},
   pages={137--205}
}

\bib{hillen}{article}{
AUTHOR = {Hillen, T.},
author={Painter, K. J.}
     TITLE = {A user's guide to {PDE} models for chemotaxis},
   JOURNAL = {J. Math. Biol.},
    VOLUME = {58},
      YEAR = {2009},
    NUMBER = {1-2},
     PAGES = {183--217}
}
\bib{iuorio}{article}{
AUTHOR = {Iuorio, A.},
AUTHOR = {Jankowiak, G.},
AUTHOR = {Szmolyan, P.},
AUTHOR = {Wolfram, M.T.},
     TITLE = {A {PDE} model for unidirectional flows: stationary profiles
              and asymptotic behaviour},
   JOURNAL = {J. Math. Anal. Appl.},
    VOLUME = {510},
      YEAR = {2022},
    NUMBER = {2},
     PAGES = {Paper No. 126018, 33},
}

\bib{marah}{article}{
   author={Marah, A.},
   author={Redwane, H.},
   title={Existence Result for Solutions to Some Noncoercive Elliptic
   Equations},
   journal={Acta Appl. Math.},
   volume={187},
   date={2023},
   pages={Paper No. 18},
}

\bib{phuc}{article}{
   author={Phuc, N. C.},
   title={Morrey global bounds and quasilinear Riccati type equations below
   the natural exponent},
   language={English, with English and French summaries},
   journal={J. Math. Pures Appl. (9)},
   volume={102},
   date={2014},
   number={1},
   pages={99--123},
}

\bib{porretta}{article}{
AUTHOR = {Porretta, A.},
     TITLE = {Weak solutions to {F}okker-{P}lanck equations and mean field
              games},
   JOURNAL = {Arch. Ration. Mech. Anal.},
    VOLUME = {216},
      YEAR = {2015},
    NUMBER = {1},
     PAGES = {1--62}
}

\bib{sake}{article}{
    AUTHOR = {Sakellaris, G.},
     TITLE = {Scale invariant regularity estimates for second order elliptic
              equations with lower order coefficients in optimal spaces},
   JOURNAL = {J. Math. Pures Appl.},
    VOLUME = {156},
      YEAR = {2021},
     PAGES = {179--214},
}

\bib{stamp}{article}{
   author={Stampacchia, G.},
   title={Le probl\`eme de Dirichlet pour les \'{e}quations elliptiques du
   second ordre \`a coefficients discontinus},
   journal={Ann. Inst. Fourier (Grenoble)},
   volume={15},
   date={1965},
   pages={189--258}
}

\end{biblist}

\end{bibdiv}

\end{document}